\documentclass[11pt,reqno]{amsart}

\usepackage[a4paper,margin=31mm]{geometry}
\usepackage[T1]{fontenc}
\usepackage[utf8]{inputenc}
\usepackage{lmodern}
\usepackage{microtype}
\usepackage{mathtools,amssymb,mathrsfs}
\usepackage{booktabs,array}
\usepackage[inline]{enumitem}
\usepackage[dvipsnames]{xcolor}
\usepackage[colorlinks=true,
            linkcolor=MidnightBlue,
            citecolor=BrickRed,
            urlcolor=RoyalBlue]{hyperref}
\usepackage[nameinlink,noabbrev]{cleveref}

\allowdisplaybreaks
\setlist[itemize]{leftmargin=2em,itemsep=2pt,topsep=4pt}
\setlist[enumerate]{leftmargin=2.3em,itemsep=2pt,topsep=4pt}

\newtheorem{theorem}{Theorem}[section]
\newtheorem{proposition}[theorem]{Proposition}
\newtheorem{lemma}[theorem]{Lemma}
\newtheorem{corollary}[theorem]{Corollary}
\newtheorem{remark}[theorem]{Remark}
\newtheorem{example}[theorem]{Example}
\theoremstyle{definition}

\newcommand{\R}{\mathbb R}
\newcommand{\SX}{S_X}

\newcommand{\M}{\mathcal M}
\newcommand{\Ssk}{\mathcal S}
\newcommand{\Jsk}{\mathcal J}

\newcommand{\cTau}{c_{\tau}}
\newcommand{\rTau}{r_{\tau}}
\newcommand{\hTau}{h_{t}(\tau)}
\newcommand{\norm}[1]{\left\lVert #1\right\rVert}
\newcommand{\abs}[1]{\left\lvert #1\right\rvert}
\newcommand{\ip}[2]{\left\langle #1,#2\right\rangle}

\title[]{}

\author{}
\thanks{Affiliation: School of Mathematics and Statistics, Anqing Normal University, Anqing 246133, P. R. China.}

\subjclass[2020]{Primary 46B20; Secondary 46C15, 47H10}
\keywords{Sch\"affer constant, skew James constant, uniformly nonsquare space, normal structure, power mean, Lebesgue space}
\hypersetup{
  pdftitle={Skew Schaffer Type Constants},
  pdfauthor={Ayla Lee},
  pdfsubject={Geometric constants in Banach spaces},
  pdfkeywords={Schaffer constant, skew James constant, uniform nonsquareness, normal structure}
}

\begin{document}

\begin{center}
	{\LARGE\bfseries Skew Sch\"affer Type Constants}\par
	\vspace{1.1em}
	{\large Ying Xu$^{1}$ and Qi Liu$^{1*}$}\par
	\vspace{0.8em}
	$^{1*}$School of Mathematics and Statistics, Anqing Normal University, Anqing, 246133,\par
	Anhui, China.\par
	\vspace{1.0em}
	\textit{Corresponding author(s).} E-mail(s): \href{mailto:liuq67@aqnu.edu.cn}{liuq67@aqnu.edu.cn};\par
	\textit{Contributing authors:} \href{mailto:Y26220046@stu.aqnu.edu.cn}{Y26220046@stu.aqnu.edu.cn};\par
\end{center}

\begin{abstract}
We introduce the skew Sch\"affer type constant as an infimal counterpart of the recently developed skew James type constant \cite{RaoWuLiuNiWang} and establish its basic geometric theory in Banach spaces. The new constant admits sharp universal bounds and characterizes uniform nonsquareness through a strict gap condition. Exact values are obtained in Hilbert and Lebesgue spaces, where an exact relation with the skew James type constant is established. We further derive a James-modulus estimate leading to sufficient conditions for uniform normal structure.
\end{abstract}

\maketitle

\section{Introduction}

Geometric constants compress metric information about the unit sphere of a
Banach space into quantities that can be compared, computed, and used in
fixed point theory.  Their classical point of departure is the
Jordan--von Neumann identity
\[
  \norm{x+y}^{2}+\norm{x-y}^{2}
  =2\norm{x}^{2}+2\norm{y}^{2},
\]
which characterizes inner-product norms \cite{JordanVonNeumann,Day}.  The
James and Sch\"affer constants subsequently provided two complementary ways
to measure square configurations in the unit sphere; see
\cite{James,Schaffer,KatoMaligrandaTakahashi}.  Their usefulness is not merely
descriptive: strict inequalities for these constants imply reflexivity,
normal structure, or fixed point properties for nonexpansive mappings.

Power-mean versions of the James and Sch\"affer constants were organized by
Takahashi \cite{Takahashi} and studied further in
\cite{WangYang,ZuoTang,MartiniPapiniWu}.  In particular, Yang and Wang
\cite{WangYang} investigated fundamental properties of the James type
constant and established several relations with other geometric quantities
in Banach spaces.  Their work provided important tools for describing
quantitative nonsquareness through supremal geometric invariants and
motivated subsequent developments of parameterized and skew versions of
James type constants.
If
\[
  \M_t(a,b)=\left(\frac{a^t+b^t}{2}\right)^{1/t}
  \quad (1\leq t<\infty),
  \qquad
  \M_\infty(a,b)=\max\{a,b\},
\]
then the usual Sch\"affer type family uses the symmetric pair
$x+\tau y,x-\tau y$.  This symmetry is powerful: after independent
normalization of two vectors, the sum of the two $t$-powers remains an even
convex function of each scaling variable.  It is precisely this observation
that underlies several homogeneous representations and estimates in the
classical theory.

Another line of work starts from the Euler--Lagrange transform
\[
  (x,y)\longmapsto (x+\tau y,\,\tau x-y).
\]
Quadratic supremal invariants associated with this transform were considered
in \cite{LiuLi,LiuZhouSarfrazLi,NiLiuZhou}.  More recently, Rao, Wu, Liu, Ni,
and Wang introduced the full power-mean supremal invariant associated with the
skew transform \cite{RaoWuLiuNiWang}:
\begin{equation}\label{eq:skew-james-intro}
  \Jsk_t[\tau,X]
  =\sup_{x,y\in \SX}
     \M_t\bigl(\norm{x+\tau y},\norm{\tau x-y}\bigr),
\end{equation}
which is called the skew James type constant.  In contrast, the corresponding
infimal geometry has not been developed.  The purpose of the present paper is
to fill this gap by introducing and studying the skew Sch\"affer type
constant.

Let $X$ be a nonzero real Banach space, let $\tau\geq0$, and let
$1\leq t\leq\infty$.  Our central object is
\begin{equation}\label{eq:def-S-intro}
  \Ssk_{X,t}(\tau)
  :=\inf_{x,y\in \SX}
     \M_t\bigl(\norm{x+\tau y},\norm{\tau x-y}\bigr).
\end{equation}
We call \eqref{eq:def-S-intro} the \emph{skew Sch\"affer type constant}.  At
$\tau=1$ it reduces to the familiar power-mean Sch\"affer invariant, but for
$\tau\neq1$ it differs essentially from the pair
$x+\tau y,x-\tau y$.  Thus the adjective \emph{skew} records an actual
change of geometry rather than a change of notation.

The mechanism behind our results is the matrix
\begin{equation}\label{eq:A-intro}
  A_\tau=
  \begin{pmatrix}1&\tau\\[2pt]\tau&-1\end{pmatrix},
  \qquad
  A_\tau^2=(1+\tau^2)I_2.
\end{equation}
The scaled-involution identity in \eqref{eq:A-intro} has three consequences.
First, it produces a sharp lower bound whose equality case detects
asymptotic copies of the square.  Second, on $L_p$ it turns the new constant
into the minimum modulus of $A_\tau$ on $\ell_p^2$.  Third, although a single
skew pair is not separately even under rescaling, adding the pair with the
variables reversed restores the required convex symmetry.  This last
device yields a complete analogue of the James-modulus estimate used in
\cite{ZuoTang}, now with a proof that makes the lost symmetry explicit.

The main conclusions are as follows.
\begin{itemize}
  \item The sharp chain
  \[
     \frac{1+\tau^2}{1+\tau}
     \leq \Ssk_{X,t}(\tau)
     \leq \M_t(1+\tau,\abs{1-\tau})
     \leq \Jsk_t[\tau,X]
     \leq 1+\tau
  \]
  holds for every nonzero real Banach space.
  \item Except for the degenerate pair $(t,\tau)=(1,1)$, strictness in the
  first inequality is equivalent to uniform nonsquareness.
  \item If $H$ is a Hilbert space of dimension at least two, both the
  infimal and supremal skew constants are computed exactly.  If
  $X=L_p(\mu)$ has dimension at least two, then
  \[
     \Ssk_{X,p}(\tau)
       =\frac{1+\tau^2}{\norm{A_\tau}_{\ell_p^2\to\ell_p^2}},
     \qquad
     \Jsk_p[\tau,X]=\norm{A_\tau}_{\ell_p^2\to\ell_p^2}.
  \]
  \item For $0\leq\tau,\varepsilon\leq1$ and $1\leq t<\infty$,
  \[
  \Ssk_{X,t}(\tau)J(\varepsilon,X)
  \leq
  \M_t\bigl(1+\tau+\varepsilon(1-\tau),
             1-\tau+\varepsilon(1+\tau)\bigr),
  \]
  where $J(\varepsilon,X)$ is the classical James modulus.  This estimate
  yields a flexible two-parameter criterion for uniform normal structure.
\end{itemize}

The paper is organized accordingly.  Parameter structure is developed in
\cref{sec:basic}, and exact model computations are given in
\cref{sec:models}.  Sharp bounds and the lower-bound gap are analyzed in
\cref{sec:unsquare}, while the James-modulus and normal-structure
consequences occupy \cref{sec:normal}.

\section{Parameter structure}\label{sec:basic}

Throughout, $X$ is a nonzero real Banach space, $\SX$ is its unit sphere,
and $1\leq t\leq\infty$.  We use
\[
  \cTau:=1+\tau^2,
  \qquad
  \rTau:=\frac{1+\tau^2}{1+\tau},
  \qquad
  \hTau:=\M_t(1+\tau,\abs{1-\tau}).
\]
In addition to \eqref{eq:def-S-intro}, we use the supremal constant
$\Jsk_t[\tau,X]$ from \eqref{eq:skew-james-intro}.  The two quantities are
the lower and upper radii, respectively, of the same power-mean profile on
$\SX\times\SX$.

We first record elementary facts about two-point power means.  If
$1\leq r<s\leq\infty$, then
\begin{equation}\label{eq:mean-comparison}
  \M_r(a,b)\leq\M_s(a,b)
  \leq 2^{1/r-1/s}\M_r(a,b),
\end{equation}
where $1/\infty=0$.  Moreover, as the normalized $\ell_t^2$ norm,
\begin{equation}\label{eq:mean-lipschitz}
 \abs{\M_t(a_1,a_2)-\M_t(b_1,b_2)}
 \leq \M_t(\abs{a_1-b_1},\abs{a_2-b_2}).
\end{equation}

\begin{proposition}\label{prop:structure}
For every nonzero Banach space $X$, the following statements hold.
\begin{enumerate}[label=\textup{(\roman*)}]
  \item $\Ssk_{X,t}(0)=\Jsk_t[0,X]=1$.
  \item For $\tau>0$,
  \[
     \Ssk_{X,t}(\tau)=\tau\Ssk_{X,t}(\tau^{-1}),
     \qquad
     \Jsk_t[\tau,X]=\tau\Jsk_t[\tau^{-1},X].
  \]
  \item If $1\leq r<s\leq\infty$, then
  \begin{align*}
     \Ssk_{X,r}(\tau)&\leq\Ssk_{X,s}(\tau)
       \leq2^{1/r-1/s}\Ssk_{X,r}(\tau),\\
     \Jsk_r[\tau,X]&\leq\Jsk_s[\tau,X]
       \leq2^{1/r-1/s}\Jsk_r[\tau,X].
  \end{align*}
  \item The functions $\tau\mapsto\Ssk_{X,t}(\tau)$ and
  $\tau\mapsto\Jsk_t[\tau,X]$ are $1$-Lipschitz on $[0,\infty)$.
\end{enumerate}
\end{proposition}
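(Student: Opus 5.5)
All four parts reduce to identities or estimates that hold pointwise on $\SX\times\SX$ and then pass to the infimum and supremum. Throughout I write $F_t(\tau;x,y)=\M_t(\norm{x+\tau y},\norm{\tau x-y})$, so that $\Ssk_{X,t}(\tau)=\inf F_t$ and $\Jsk_t[\tau,X]=\sup F_t$ over $x,y\in\SX$.

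\emph{Part (i).} At $\tau=0$ we have $F_t(0;x,y)=\M_t(\norm{x},\norm{y})=\M_t(1,1)=1$ for every pair. Hence both the infimum and the supremum equal $1$.

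\emph{Part (ii).} For $\tau>0$, factor $\tau$ out of each vector:
\[
x+\tau y=\tau(\tau^{-1}x+y), \qquad \tau x-y=\tau(x-\tau^{-1}y).
\]
By positive homogeneity of $\M_t$,
\[
F_t(\tau;x,y)=\tau\,\M_t\bigl(\norm{y+\tau^{-1}x},\norm{x-\tau^{-1}y}\bigr).
\]
Now replace $y$ by $-y$; the map $(x,y)\mapsto(-y,x)$ is a bijection of $\SX\times\SX$. Under it, $\norm{y+\tau^{-1}x}$ becomes $\norm{x+\tau^{-1}(-y)}$ after relabeling. The second norm is $\norm{x-\tau^{-1}y}=\norm{\tau^{-1}y-x}$, which becomes $\norm{\tau^{-1}(-y)\cdot(-1)-x}$.

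The relabeling needs care, so I make it explicit. Put $u=y$ and $v=-x$. Then
\[
\norm{y+\tau^{-1}x}=\norm{u-\tau^{-1}v}=\norm{\tau^{-1}u\cdot\tau - \cdots}.
\]
This route is getting messy, so instead I set $u=-y$ and $v=x$. Then
\[
u+\tau^{-1}v=-y+\tau^{-1}x,
\]
which does not match directly. The cleanest choice is $u=x$, $v=y$ with the two coordinates swapped: the pair is $\bigl(\norm{x-\tau^{-1}y},\norm{\tau^{-1}x+y}\bigr)$. With $u=x$ and $v=-y$ this becomes
\[
\bigl(\norm{u+\tau^{-1}v},\norm{\tau^{-1}u-v}\bigr).
\]
Since $\M_t$ is symmetric in its two arguments and $(x,y)\mapsto(x,-y)$ is a bijection of $\SX\times\SX$, we get
\[
F_t(\tau;x,y)=\tau F_t(\tau^{-1};u,v).
\]
Taking the infimum and the supremum gives both identities in (ii).

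\emph{Part (iii).} Apply \eqref{eq:mean-comparison} pointwise with $a=\norm{x+\tau y}$ and $b=\norm{\tau x-y}$. This gives
\[
F_r\le F_s\le 2^{1/r-1/s}F_r
\]
at every pair, and both inequalities are preserved under infimum and supremum over $\SX\times\SX$.

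\emph{Part (iv).} Take $\tau,\sigma\ge0$ and a fixed pair $(x,y)$. By \eqref{eq:mean-lipschitz} and the triangle inequality,
\[
\abs{F_t(\tau;x,y)-F_t(\sigma;x,y)}\le \M_t\bigl(\norm{(\tau-\sigma)y},\norm{(\tau-\sigma)x}\bigr)=\abs{\tau-\sigma}.
\]
A family of functions that are each $1$-Lipschitz in $\tau$, uniformly in the parameter $(x,y)$, has a $1$-Lipschitz infimum and a $1$-Lipschitz supremum. Both values are finite, since $F_t$ lies between $0$ and $1+\tau$.

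The only step needing any care is the change of variables in (ii), namely checking that the sign flip on $y$ and the coordinate swap both preserve $\M_t$ and the sphere. Everything else is routine.
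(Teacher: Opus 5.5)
Your proof is correct and follows the paper's argument in all four parts: you establish pointwise identities or estimates on $\SX\times\SX$ and then take infima and suprema. The only variation is in (ii), where you use the substitution $(x,y)\mapsto(x,-y)$ together with the symmetry of $\M_t$; this is valid. The paper instead uses the plain swap $(x,y)\mapsto(y,x)$, which gives $\tau F_t(\tau^{-1};x,y)=F_t(\tau;y,x)$ directly, and you were one step away from it with $u=y$, $v=x$. You should also delete the abandoned false starts in that paragraph.
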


\begin{proof}
Part (i) follows directly from $\norm{x}=\norm{y}=1$.  For (ii), multiply
the expression with parameter $\tau^{-1}$ by $\tau$ and then interchange
$x$ and $y$:
\begin{align*}
 \tau\M_t\bigl(\norm{x+\tau^{-1}y},\norm{\tau^{-1}x-y}\bigr)
  &=\M_t\bigl(\norm{\tau x+y},\norm{x-\tau y}\bigr)\\
  &=\M_t\bigl(\norm{y+\tau x},\norm{\tau y-x}\bigr).
\end{align*}
Taking the infimum or supremum gives the assertion.  Part (iii) is
\eqref{eq:mean-comparison} followed by the same extremal operation.

For (iv), fix $x,y\in\SX$ and $\tau,\sigma\geq0$.  The reverse triangle
inequality gives
\[
 \abs{\norm{x+\tau y}-\norm{x+\sigma y}}\leq\abs{\tau-\sigma},
 \qquad
 \abs{\norm{\tau x-y}-\norm{\sigma x-y}}\leq\abs{\tau-\sigma}.
\]
By \eqref{eq:mean-lipschitz}, the corresponding power-mean profiles differ
by at most $\abs{\tau-\sigma}$, uniformly in $(x,y)$.  Infima and suprema of
uniformly Lipschitz families preserve the same Lipschitz constant.
\end{proof}

\section{Exact formulas in Hilbert and \texorpdfstring{$L_p$}{Lp} spaces}\label{sec:models}

The transform $A_\tau$ behaves like a rotation followed by the dilation
$\sqrt{1+\tau^2}$ in Euclidean geometry.  This gives a complete Hilbert
space calculation and serves as a benchmark for the non-Hilbert formulas.

\begin{theorem}\label{thm:hilbert}
Let $H$ be a real Hilbert space with $\dim H\geq2$.  For every $\tau\geq0$
and $1\leq t\leq\infty$,
\begin{align}
 \Ssk_{H,t}(\tau)
   &=\min\left\{\sqrt{1+\tau^2},
           \M_t(1+\tau,\abs{1-\tau})\right\},\label{eq:Hilbert-S}\\
 \Jsk_t[\tau,H]
   &=\max\left\{\sqrt{1+\tau^2},
           \M_t(1+\tau,\abs{1-\tau})\right\}.
           \label{eq:Hilbert-J}
\end{align}
Equivalently,
\[
\Ssk_{H,t}(\tau)=
\begin{cases}
 \M_t(1+\tau,\abs{1-\tau}),&1\leq t\leq2,\\[2pt]
 \sqrt{1+\tau^2},&2\leq t\leq\infty,
\end{cases}
\]
and the two cases are reversed for $\Jsk_t[\tau,H]$.
\end{theorem}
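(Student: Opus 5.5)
In a Hilbert space the profile depends only on $\ip{x}{y}$. For $x,y\in S_H$ put $c=\ip{x}{y}\in[-1,1]$. Expanding the squares gives
\[
 \norm{x+\tau y}^2=1+\tau^2+2\tau c,\qquad \norm{\tau x-y}^2=1+\tau^2-2\tau c .
\]
Since $\dim H\geq2$, every $c\in[-1,1]$ is attained by some pair $x,y\in S_H$. Writing $u=1+\tau^2$ and $s=2\tau c\in[-2\tau,2\tau]$, both constants reduce to one-variable problems:
\[
 \Ssk_{H,t}(\tau)=\min_{|s|\le2\tau}\phi(s),\qquad \Jsk_t[\tau,H]=\max_{|s|\le2\tau}\phi(s),\qquad
 \phi(s)=\M_t\bigl(\sqrt{u+s},\sqrt{u-s}\bigr).
\]
The endpoints $s=\pm2\tau$ give $\phi=\M_t(1+\tau,\abs{1-\tau})$, since $u\pm2\tau=(1\pm\tau)^2$. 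The midpoint $s=0$ gives $\phi=\sqrt{1+\tau^2}$. We will show that $\phi$ is monotone in $|s|$, so that its extrema sit at $s=0$ and $|s|=2\tau$.

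\emph{Monotonicity.} For finite $t$, set $g(s)=(u+s)^{t/2}+(u-s)^{t/2}$, so that $\phi=(g/2)^{1/t}$. The function $g$ is even, and it is convex when $t\ge2$ and concave when $1\le t\le2$, because $v\mapsto v^{t/2}$ is convex or concave accordingly. An even convex function is nondecreasing in $|s|$, and an even concave function is nonincreasing in $|s|$. Hence:
\begin{itemize}
 \item for $t\ge2$, the minimum of $\phi$ is $\sqrt{u}$ (at $s=0$) and the maximum is $\M_t(1+\tau,\abs{1-\tau})$ (at the endpoints);
 \item for $t\le2$, the roles are reversed.
\end{itemize}
For $t=\infty$, $\phi(s)=\sqrt{u+|s|}$, which is increasing in $|s|$, so the $t\ge2$ case applies. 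At $t=2$, $\phi\equiv\sqrt{u}$ is constant, so both descriptions agree.

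\emph{Min/max form.} To get \eqref{eq:Hilbert-S} and \eqref{eq:Hilbert-J} as written, we compare the two candidate values, using \eqref{eq:mean-comparison} and the identity $\M_2(1+\tau,\abs{1-\tau})=\sqrt{1+\tau^2}$. This gives $\M_t(1+\tau,\abs{1-\tau})\le\sqrt{1+\tau^2}$ exactly when $t\le2$, and the reverse inequality when $t\ge2$. So the minimum and maximum of the two candidates coincide with the case description. It is consistent (though not needed here) with the chain in the introduction.

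There is no serious obstacle; the only point needing care is the attainability of every $c\in[-1,1]$. We take $x=e_1$ and $y=ce_1+\sqrt{1-c^2}\,e_2$ with orthonormal $e_1,e_2$, which exist because $\dim H\geq2$. This makes the infimum and supremum equal to the exact minimum and maximum over $s$.
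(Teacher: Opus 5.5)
Your proposal is correct and follows essentially the same route as the paper. Both reduce to the inner product $\ip{x}{y}$, use that the even profile $(u+s)^{t/2}+(u-s)^{t/2}$ is concave for $t\le 2$ and convex for $t\ge 2$, treat $t=\infty$ directly, and obtain the min/max form by comparing with the quadratic mean $\M_2(1+\tau,\abs{1-\tau})=\sqrt{1+\tau^2}$. Your explicit pair $x=e_1$, $y=ce_1+\sqrt{1-c^2}\,e_2$ simply spells out the attainability step that the paper asserts.
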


\begin{proof}
For $x,y\in S_H$, set $s=\ip{x}{y}\in[-1,1]$.  Then
\begin{align*}
 \norm{x+\tau y}^2&=1+\tau^2+2\tau s,\\
 \norm{\tau x-y}^2&=1+\tau^2-2\tau s.
\end{align*}
Every $s\in[-1,1]$ occurs because $\dim H\geq2$.  If $1\leq t<\infty$,
the $t$th power of the profile is therefore
\begin{equation}\label{eq:hilbert-profile}
 \frac{(\cTau+2\tau s)^{t/2}+(\cTau-2\tau s)^{t/2}}{2}.
\end{equation}
For $1\leq t<2$, the two summands in \eqref{eq:hilbert-profile} are
concave as functions of their nonnegative arguments; the even profile is
maximal at $s=0$ and minimal at $s=\pm1$.  For $t>2$, convexity reverses
the two extrema.  At $t=2$ the profile is identically $\cTau$.  The
endpoint $t=\infty$ follows directly by minimizing or maximizing
\[
 \max\{(\cTau+2\tau s)^{1/2},
        (\cTau-2\tau s)^{1/2}\}.
\]
This proves the piecewise formulas.  The min--max forms
\eqref{eq:Hilbert-S}--\eqref{eq:Hilbert-J} also follow from the power-mean
comparison with the quadratic mean.
\end{proof}

We next obtain an exact formula on Lebesgue spaces.  For
$1\leq p\leq\infty$, let
\[
 \nu_p(\tau):=\norm{A_\tau}_{\ell_p^2\to\ell_p^2},
 \qquad
 m_p(\tau):=\inf_{\norm{z}_p=1}\norm{A_\tau z}_p.
\]
Since $A_\tau^{-1}=\cTau^{-1}A_\tau$,
\begin{equation}\label{eq:minmod-involution}
  m_p(\tau)=\frac{1}{\norm{A_\tau^{-1}}_{p\to p}}
  =\frac{\cTau}{\nu_p(\tau)}.
\end{equation}

\begin{theorem}\label{thm:Lp}
Let $1\leq p\leq\infty$, and let $X=L_p(\Omega,\mu)$ have dimension at
least two.  Then
\begin{equation}\label{eq:Lp-main}
 \
 \Ssk_{X,p}(\tau)=m_p(\tau)=\frac{1+\tau^2}{\nu_p(\tau)},
 \qquad
 \Jsk_p[\tau,X]=\nu_p(\tau).
\end{equation}
In particular,
\begin{equation}\label{eq:product-identity}
  \Ssk_{X,p}(\tau)\,\Jsk_p[\tau,X]=1+\tau^2.
\end{equation}
Here the convention for $p=\infty$ is that the outer mean is the maximum.
\end{theorem}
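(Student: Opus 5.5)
The plan is to prove the two sharp inequalities $m_p(\tau)\leq\Ssk_{X,p}(\tau)$ and $\Jsk_p[\tau,X]\leq\nu_p(\tau)$ pointwise in $\Omega$. We then realize both extremal values using two disjointly supported functions, which exist because $\dim X\geq2$. The product identity \eqref{eq:product-identity} then follows at once from \eqref{eq:minmod-involution}.

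\emph{Upper and lower bounds for $1\leq p<\infty$.} Fix $x,y\in\SX$ and write $z(\omega)=(x(\omega),y(\omega))\in\R^2$. Then $A_\tau z(\omega)=(x(\omega)+\tau y(\omega),\,\tau x(\omega)-y(\omega))$, so
\[
 2\,\M_p\bigl(\norm{x+\tau y},\norm{\tau x-y}\bigr)^p
 =\int_\Omega \norm{A_\tau z(\omega)}_p^p\,d\mu .
\]
For each $\omega$ the integrand lies between $m_p(\tau)^p\norm{z(\omega)}_p^p$ and $\nu_p(\tau)^p\norm{z(\omega)}_p^p$. Moreover,
\[
 \int_\Omega\norm{z(\omega)}_p^p\,d\mu=\norm{x}^p+\norm{y}^p=2.
\]
Dividing by $2$ and taking $p$-th roots gives
\[
 m_p(\tau)\leq\M_p\bigl(\norm{x+\tau y},\norm{\tau x-y}\bigr)\leq\nu_p(\tau).
\]

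\emph{Upper and lower bounds for $p=\infty$.} Here $\max\{\norm{x+\tau y}_\infty,\norm{\tau x-y}_\infty\}$ equals $\operatorname{ess\,sup}_\omega\norm{A_\tau z(\omega)}_\infty$. The upper bound $\nu_\infty(\tau)$ follows because $\norm{z(\omega)}_\infty\leq1$ almost everywhere. For the lower bound we need a point where $z(\omega)$ has sup-norm close to $1$. Since $\norm{x}_\infty=1$, the set where $\abs{x}>1-\delta$ has positive measure, and on it $\norm{z(\omega)}_\infty>1-\delta$. Hence the ess sup is at least $m_\infty(\tau)(1-\delta)$, and letting $\delta\to0$ gives the lower bound $m_\infty(\tau)$.

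\emph{Attainment.} Since $\dim X\geq2$, there are disjoint measurable sets $E_1,E_2$ of positive finite measure; for $p=\infty$ finiteness is not needed. Let $u_i$ be the normalized indicator of $E_i$. Take a vector $(a,b)$ with $\abs a^p+\abs b^p=1$ (or $\max\{\abs a,\abs b\}=1$ when $p=\infty$) at which $\norm{A_\tau(a,b)}_p$ is extremal; it exists by compactness. Put
\[
 x=\bar a\,u_1+\bar b\,u_2,\qquad y=\bar c\,u_1+\bar d\,u_2 .
\]
The coefficients are chosen so that the pointwise vectors on $E_1$ and $E_2$ are proportional to $(a,b)$, subject to $\norm{x}=\norm{y}=1$.

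The natural choice for $1\leq p<\infty$ is
\[
 x=2^{1/p}a\,u_1,\qquad y=2^{1/p}b\,u_1 .
\]
Their norms, however, are $2^{1/p}\abs a$ and $2^{1/p}\abs b$, which need not equal $1$. This normalization is the main obstacle.

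\emph{Handling normalization.} Put $z_1=(a,b)$ on $E_1$ and $z_2=(b,-a)$-type vectors on $E_2$; more precisely, use $z_2=(\pm b,\pm a)$ with signs chosen so that $\norm{A_\tau z_2}_p=\norm{A_\tau z_1}_p$. Set
\[
 x=a\,u_1+ b\,u_2,\qquad y=b\,u_1+ a\,u_2 ,
\]
adjusting signs as needed. Then $\norm{x}_p^p=\norm{y}_p^p=\abs a^p+\abs b^p=1$, and $\sum_i\norm{z_i}_p^p=2$. The equality $\norm{A_\tau(\pm b,\pm a)}_p=\norm{A_\tau(a,b)}_p$ follows from the symmetries of $A_\tau$: swapping coordinates and negating one entry conjugates $A_\tau$ to $\pm A_\tau$. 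Concretely, with $P$ the swap matrix and $D=\operatorname{diag}(1,-1)$, one has $DA_\tau D=\tau P-$\dots; we will check which signed permutation $Q$ satisfies $A_\tau Q=\pm Q' A_\tau$ for a signed permutation $Q'$. The candidate is
\[
 Q=\begin{pmatrix}0&-1\\1&0\end{pmatrix}, \qquad A_\tau Q=\begin{pmatrix}\tau&-1\\-1&-\tau\end{pmatrix},
\]
which is a signed row permutation of $A_\tau$. Since signed permutations are $\ell_p$-isometries, the norms agree. Both profiles then equal the extremal value exactly, giving equality in both inequalities. The case $p=\infty$ is handled the same way with maxima in place of sums.
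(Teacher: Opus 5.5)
Your proposal is correct and follows essentially the same route as the paper. Both arguments integrate (or take the essential supremum of) the pointwise minimum-modulus and operator-norm bounds, and then attain both extremes with two disjointly supported functions carrying the pointwise vectors $(a,b)$ and $Q(a,b)=(-b,a)$. This is the paper's choice $x=af+bg$, $y=bf-ag$ with $g$ replaced by $-g$.

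In a final write-up, drop the false starts and the unsigned display $y=b\,u_1+a\,u_2$. The pair $(b,a)$ does not preserve $\norm{A_\tau z}_p$ in general: for $p=1$, $\tau=1/2$ and the minimizer $(a,b)=(2/3,1/3)$ it gives $7/6$ instead of $5/6$. State the construction directly as $x=a\,u_1-b\,u_2$, $y=b\,u_1+a\,u_2$.
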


\begin{proof}
Assume first that $1\leq p<\infty$.  The scalar minimum-modulus inequality
gives, almost everywhere,
\[
 |x+\tau y|^p+|\tau x-y|^p
 \geq m_p(\tau)^p\bigl(|x|^p+|y|^p\bigr).
\]
After integration, every $x,y\in S_X$ satisfy
\[
 \frac{\norm{x+\tau y}_p^p+\norm{\tau x-y}_p^p}{2}
 \geq m_p(\tau)^p.
\]
The operator-norm inequality gives the analogous upper estimate with
$\nu_p(\tau)^p$.  Hence
\begin{equation}\label{eq:Lp-prebounds}
 \Ssk_{X,p}(\tau)\geq m_p(\tau),
 \qquad
 \Jsk_p[\tau,X]\leq\nu_p(\tau).
\end{equation}

Because $\dim L_p(\mu)\geq2$, there exist disjointly supported unit vectors
$f,g\in L_p(\mu)$.  Let $z=(a,b)\in\ell_p^2$ be a unit vector.  Define
\begin{equation}\label{eq:symmetrizing-vectors}
  x=af+bg,
  \qquad
  y=bf-ag.
\end{equation}
Then $\norm{x}_p=\norm{y}_p=1$, and disjointness gives
\begin{align*}
 \norm{x+\tau y}_p^p
 &=|a+\tau b|^p+|b-\tau a|^p,\\
 \norm{\tau x-y}_p^p
 &=|\tau a-b|^p+|\tau b+a|^p.
\end{align*}
The two right-hand sides are equal to $\norm{A_\tau z}_p^p$.  Choosing $z$
to minimize and then to maximize the scalar ratio shows that both estimates
in \eqref{eq:Lp-prebounds} are attained.  Formula
\eqref{eq:minmod-involution} yields \eqref{eq:Lp-main} and
\eqref{eq:product-identity}.

For $p=\infty$, apply the pointwise inequality
\[
 \max\{|x+\tau y|,|\tau x-y|\}
 \geq m_\infty(\tau)\max\{|x|,|y|\}
\]
and its operator-norm counterpart.  Taking essential suprema proves the
analogues of \eqref{eq:Lp-prebounds}.  The same disjoint-support
construction \eqref{eq:symmetrizing-vectors}, now with
$\max\{|a|,|b|\}=1$, attains the two scalar extrema.
\end{proof}

\begin{remark}\label{rem:balance}
The operator minimum modulus is taken over arbitrary scalar pairs $(a,b)$,
whereas \eqref{eq:def-S-intro} requires two vectors of equal norm.  The
second coordinate pair $(b,-a)$ in \eqref{eq:symmetrizing-vectors} is not
cosmetic: it interchanges, up to signs, the two coordinates of
$A_\tau(a,b)$.  It therefore balances the input norms without changing the
output ratio.  This is the step that upgrades an operator estimate to an
exact sphere formula.
\end{remark}

For $1\leq p<\infty$, the scalar norm in \cref{thm:Lp} is a one-variable
optimization:
\begin{equation}\label{eq:one-variable}
 \nu_p(\tau)^p
 =\max_{s\in\R\cup\{\infty\}}
   \frac{|1+\tau s|^p+|\tau-s|^p}{1+|s|^p},
\end{equation}
where the value at infinity is understood as the corresponding limit.
Thus \eqref{eq:Lp-main} is an exact, computationally effective formula even
when no simpler elementary expression for $\nu_p(\tau)$ is available.

\begin{corollary}
\label{cor:distinguished}
Let $X=L_p(\mu)$ have dimension at least two.
\begin{enumerate}[label=\textup{(\roman*)}]
 \item If $p=1$ or $p=\infty$, then
 \[
   \Ssk_{X,p}(\tau)=\frac{1+\tau^2}{1+\tau},
   \qquad
   \Jsk_p[\tau,X]=1+\tau.
 \]
 \item If $p=2$, then
 \[
   \Ssk_{X,2}(\tau)=\Jsk_2[\tau,X]=\sqrt{1+\tau^2}.
 \]
 \item If $\tau=1$, then
 \begin{align*}
   \Jsk_p[1,X]&=2^{\max\{1/p,\,1-1/p\}},\\
   \Ssk_{X,p}(1)&=2^{\min\{1/p,\,1-1/p\}}.
 \end{align*}
 \item If $p'=p/(p-1)$, with the usual endpoint convention, then
 \[
   \Ssk_{L_p,p}(\tau)=\Ssk_{L_{p'},p'}(\tau),
   \qquad
   \Jsk_p[\tau,L_p]=\Jsk_{p'}[\tau,L_{p'}].
 \]
\end{enumerate}
\end{corollary}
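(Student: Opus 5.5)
Everything in the corollary follows from \cref{thm:Lp}: $\Ssk_{X,p}(\tau)=\cTau/\nu_p(\tau)$ and $\Jsk_p[\tau,X]=\nu_p(\tau)$. So the plan is to compute or compare the scalar operator norm $\nu_p(\tau)=\norm{A_\tau}_{\ell_p^2\to\ell_p^2}$ in each case and substitute.

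For (i), the cases $p=1$ and $p=\infty$ use the standard formulas for matrix norms on $\ell_1^2$ and $\ell_\infty^2$. On $\ell_1^2$ the norm is the maximal absolute column sum; on $\ell_\infty^2$ it is the maximal absolute row sum. For $A_\tau$ both equal $1+\tau$, so $\nu_1(\tau)=\nu_\infty(\tau)=1+\tau$. Dividing $\cTau$ by this gives $\rTau$, as claimed.

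For (ii), I would use \eqref{eq:A-intro}: $A_\tau^TA_\tau=A_\tau^2=\cTau I_2$. Hence $\norm{A_\tau z}_2=\sqrt{\cTau}\,\norm{z}_2$ for every $z$, so $\nu_2(\tau)=\sqrt{1+\tau^2}$ and also $m_2(\tau)=\sqrt{1+\tau^2}$. This agrees with \cref{thm:hilbert} at $t=2$.

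For (iii), take $\tau=1$. Then $A_1=\begin{pmatrix}1&1\\1&-1\end{pmatrix}$, and I need its $\ell_p^2$ norm.
\begin{itemize}
\item \emph{Upper bound.} Riesz--Thorin interpolation between the values $2$ (at $p=1,\infty$) and $\sqrt2$ (at $p=2$) gives $\nu_p(1)\le 2^{\max\{1/p,1-1/p\}}$.
\item \emph{Lower bound for $p\le2$.} Test $z=(1,0)$. Then $\norm{A_1z}_p=2^{1/p}$, so $\nu_p(1)\ge 2^{1/p}$.
\item \emph{Lower bound for $p\ge2$.} Test $z=(1,1)$. Then $\norm{A_1z}_p=2$ while $\norm{z}_p=2^{1/p}$, so $\nu_p(1)\ge 2^{1-1/p}$.
\end{itemize}
This gives $\nu_p(1)=2^{\max\{1/p,1-1/p\}}$. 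Dividing $2$ by it yields $\Ssk_{X,p}(1)=2^{\min\{1/p,1-1/p\}}$. The only real work in (iii) is justifying the interpolation bound, and for that I would simply cite Riesz--Thorin, or the known Clarkson-type value of this norm.

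For (iv), I would use duality: $\norm{A}_{p\to p}=\norm{A^T}_{p'\to p'}$, and $A_\tau^T=A_\tau$ because the matrix is symmetric. Hence $\nu_p(\tau)=\nu_{p'}(\tau)$, and both identities follow from \eqref{eq:Lp-main}. The endpoint pair $p=1$, $p'=\infty$ is already covered by (i). Whichever $L_{p'}$ is used, only its dimension matters, and that is at least two as the hypothesis requires.

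I expect no step to be a serious obstacle. The slight subtlety is in (iii): the upper bound $\nu_p(1)\le 2^{\max\{1/p,1-1/p\}}$ needs interpolation rather than an elementary computation.
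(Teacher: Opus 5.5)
Your proposal is correct and follows the paper's proof essentially step for step. You use column/row sums for (i), $A_\tau^TA_\tau=\cTau I_2$ for (ii), interpolation plus test vectors for (iii), and symmetry of $A_\tau$ with $\ell_p$--$\ell_{p'}$ duality for (iv), all fed into \cref{thm:Lp}. The only cosmetic difference is in (iii) for $p\ge 2$: you interpolate directly on $[2,\infty]$ and use the test vector $(1,1)$, whereas the paper treats $1\le p\le 2$ and then passes to $p\ge 2$ by duality.
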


\begin{proof}
The $\ell_1^2$ and $\ell_\infty^2$ matrix norms are the maximum column and
row sums, respectively, both equal to $1+\tau$.  The Euclidean identity
$A_\tau^2=\cTau I_2$ gives $\nu_2(\tau)=\sqrt{\cTau}$.  When $\tau=1$,
interpolation between $p=1$ and $p=2$, together with the test vector
$(1,0)$, gives $\nu_p(1)=2^{1/p}$ for $1\leq p\leq2$.  Duality gives
$\nu_p(1)=2^{1-1/p}$ for $2\leq p\leq\infty$.  Finally,
$A_\tau$ is symmetric, so its $\ell_p^2$ norm equals its
$\ell_{p'}^2$ norm.  Apply \cref{thm:Lp}.
\end{proof}

{
\begin{example}\label{ex:Banas-Fraczek}
Let $\lambda>1$, and consider the classical Bana\'s--Fr\k{a}czek space
\[
 X_\lambda=(\mathbb R^2,\norm{\cdot}_\lambda),
 \qquad
 \norm{(u,v)}_\lambda
 =\max\left\{\lambda |u|,\sqrt{u^2+v^2}\right\};
\]
see \cite{YangYangBF}.  Then, for the genuinely skew range
\[
 \sqrt{1-\lambda^{-2}}\leq \tau\leq1,
\]
one has the exact formula
\begin{equation}\label{eq:Banas-Fraczek-S}
 \
 \Ssk_{X_\lambda,2}(\tau)
 =\left(\frac{(1+\tau^2)(1+\lambda^{-2})}{2}\right)^{1/2}.
\end{equation}

Indeed, write $x=(a,b)$ and $y=(c,d)$ with
$\norm{x}_\lambda=\norm{y}_\lambda=1$.  By the definition of the norm,
\[
 \max\{\lambda^2a^2,a^2+b^2\}=1,
 \qquad
 \max\{\lambda^2c^2,c^2+d^2\}=1.
\]
Moreover,
\begin{align*}
 &\norm{x+\tau y}_\lambda^2+\norm{\tau x-y}_\lambda^2\\
 &\quad\geq
 \max\Bigl\{
   \lambda^2\bigl((a+\tau c)^2+(\tau a-c)^2\bigr),
   \norm{x+\tau y}_2^2+\norm{\tau x-y}_2^2
 \Bigr\}\\
 &\quad=
 (1+\tau^2)
 \max\Bigl\{
   \lambda^2(a^2+c^2),
   \norm{x}_2^2+\norm{y}_2^2
 \Bigr\}.
\end{align*}
The unit-sphere conditions imply
\[
 \max\Bigl\{
   \lambda^2(a^2+c^2),
   \norm{x}_2^2+\norm{y}_2^2
 \Bigr\}
 \geq1+\lambda^{-2}.
\]
Hence
\[
 \Ssk_{X_\lambda,2}(\tau)
 \geq
 \left(\frac{(1+\tau^2)(1+\lambda^{-2})}{2}\right)^{1/2}.
\]

For the reverse inequality, take
\[
 x=\left(\frac1\lambda,0\right),
 \qquad y=(0,1).
\]
Then $x,y\in S_{X_\lambda}$.  Since
$\tau^2\geq1-\lambda^{-2}$ and $0\leq\tau\leq1$,
\[
 \norm{x+\tau y}_\lambda^2=\lambda^{-2}+\tau^2,
 \qquad
 \norm{\tau x-y}_\lambda^2=1+\tau^2\lambda^{-2}.
\]
Therefore
\[
 \frac{\norm{x+\tau y}_\lambda^2+\norm{\tau x-y}_\lambda^2}{2}
 =\frac{(1+\tau^2)(1+\lambda^{-2})}{2},
\]
which proves \eqref{eq:Banas-Fraczek-S}.

The threshold $\tau=\sqrt{1-\lambda^{-2}}$ is intrinsic to this example:
for the extremizing pair above, it is precisely the point at which the
Euclidean component of $\norm{x+\tau y}_\lambda$ becomes dominant.  Thus
\eqref{eq:Banas-Fraczek-S} exhibits a genuine interaction between the skew
parameter and the non-Euclidean geometry of the Bana\'s--Fr\k{a}czek unit ball.
\end{example}
}
Complex interpolation \cite{BerghLofstrom} also gives a useful closed
estimate away from $\tau=1$.  Put
\[
  \delta_p:=\abs{\frac1p-\frac12}.
\]
Interpolation between $(p,\nu_p)=(1,1+\tau)$ and
$(2,\sqrt{\cTau})$, followed by duality, yields
\begin{equation}\label{eq:interpolation-norm}
 \nu_p(\tau)
 \leq (1+\tau)^{2\delta_p}\cTau^{1/2-\delta_p}.
\end{equation}
Combining this with \cref{thm:Lp} gives the following quantitative estimate.

\begin{corollary}\label{cor:interpolation}
For $1\leq p\leq\infty$ and $\dim L_p(\mu)\geq2$,
\begin{align}
 \Ssk_{L_p,p}(\tau)
 &\geq
 \frac{\cTau^{1/2+\delta_p}}{(1+\tau)^{2\delta_p}},
 \label{eq:S-interpolation}\\
 \Jsk_p[\tau,L_p]
 &\leq
 (1+\tau)^{2\delta_p}\cTau^{1/2-\delta_p}.
 \label{eq:J-interpolation}
\end{align}
Both estimates are equalities for $p\in\{1,2,\infty\}$ and, for every
$p$, at $\tau=1$.
\end{corollary}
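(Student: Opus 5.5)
The plan is to feed the interpolation bound \eqref{eq:interpolation-norm} into the exact formulas of \cref{thm:Lp}. Then check the equality cases against \cref{cor:distinguished}.

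\textbf{The inequalities.} By \cref{thm:Lp}, $\Jsk_p[\tau,L_p]=\nu_p(\tau)$, so \eqref{eq:J-interpolation} is exactly \eqref{eq:interpolation-norm}. Likewise $\Ssk_{L_p,p}(\tau)=\cTau/\nu_p(\tau)$. Since $\nu_p(\tau)>0$, dividing $\cTau$ by the upper bound in \eqref{eq:interpolation-norm} gives
\[
\Ssk_{L_p,p}(\tau)\geq \frac{\cTau}{(1+\tau)^{2\delta_p}\cTau^{1/2-\delta_p}}=\frac{\cTau^{1/2+\delta_p}}{(1+\tau)^{2\delta_p}},
\]
which is \eqref{eq:S-interpolation}. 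For completeness I would also justify \eqref{eq:interpolation-norm} itself.
\begin{itemize}
\item For $1\leq p\leq 2$, write $1/p=(1-\theta)/1+\theta/2$, so $\theta=2-2/p=1-2\delta_p$.
\item Riesz--Thorin for the real matrix $A_\tau$ on $\ell_p^2$ gives $\nu_p\leq \nu_1^{1-\theta}\nu_2^{\theta}=(1+\tau)^{2\delta_p}\cTau^{(1-2\delta_p)/2}$. (For a real matrix the complex and real operator norms agree up to a constant, and here the constant is $1$ since $p\le 2$; the cleanest route is simply to cite the real Riesz--Thorin theorem for $p\leq q$ on the diagonal.)
\item For $2\leq p\leq\infty$, duality and the symmetry of $A_\tau$ give $\nu_p=\nu_{p'}$. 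Since $\delta_p=\delta_{p'}$, the same bound holds.
\end{itemize}

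\textbf{Equality at $p\in\{1,2,\infty\}$.} For $p=1,\infty$ we have $\delta_p=1/2$, so the right-hand sides are $1+\tau$ and $\cTau/(1+\tau)$. These match \cref{cor:distinguished}(i). For $p=2$ we have $\delta_p=0$, and both sides equal $\sqrt{\cTau}$, matching (ii).

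\textbf{Equality at $\tau=1$ for every $p$.} Here $1+\tau=2$ and $\cTau=2$, so the bound on $\Jsk$ is $2^{2\delta_p+1/2-\delta_p}=2^{1/2+\delta_p}$. Since $1/2+\abs{1/p-1/2}=\max\{1/p,1-1/p\}$, this is the value in \cref{cor:distinguished}(iii). Similarly the bound on $\Ssk$ is $2^{1/2+\delta_p-2\delta_p}=2^{1/2-\delta_p}=2^{\min\{1/p,1-1/p\}}$, again matching (iii).

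\textbf{Main obstacle.} The only delicate point is the real-versus-complex issue in applying interpolation to $\nu_p$, which is defined on real $\ell_p^2$. Riesz--Thorin for real scalars holds without loss when $p\leq q$, and here $p=q$, so this suffices. Alternatively, for a real matrix the complex $\ell_p$ norm dominates the real one, so the complex bound transfers directly as an upper estimate. Either observation suffices, and the rest is arithmetic.
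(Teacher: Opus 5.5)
Your proposal is correct and follows the paper's route: you plug the interpolation bound \eqref{eq:interpolation-norm} into the exact formulas $\Jsk_p[\tau,L_p]=\nu_p(\tau)$ and $\Ssk_{L_p,p}(\tau)=\cTau/\nu_p(\tau)$ from \cref{thm:Lp}, and you read off the equality cases from \cref{cor:distinguished}. Your handling of the real-versus-complex issue is an addition the paper omits, and your second justification is the clean one: the complex endpoint norms are still $1+\tau$ (column sums) and $\sqrt{\cTau}$ (because $A_\tau/\sqrt{\cTau}$ is orthogonal), whereas your parenthetical attributing the constant $1$ to $p\leq2$ is misattributed---what matters is $p=q$.
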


The exact values relevant to later comparisons are summarized below.
\begin{table}[ht]
\centering
\caption{Exact skew lower and upper power-mean radii.}
\label{tab:exact-values}
\renewcommand{\arraystretch}{1.25}
\begin{tabular}{@{}lll@{}}
\toprule
Setting & $\Ssk$ & $\Jsk$ \\
\midrule
$H$, $1\leq t\leq2$
  & $\M_t(1+\tau,|1-\tau|)$ & $\sqrt{1+\tau^2}$ \\
$H$, $2\leq t\leq\infty$
  & $\sqrt{1+\tau^2}$ & $\M_t(1+\tau,|1-\tau|)$ \\
$L_p$, outer exponent $p$
  & $(1+\tau^2)/\nu_p(\tau)$ & $\nu_p(\tau)$ \\
$L_p$, $\tau=1$
  & $2^{\min\{1/p,1-1/p\}}$ & $2^{\max\{1/p,1-1/p\}}$ \\
\bottomrule
\end{tabular}
\end{table}

Finally, \cref{prop:structure}(iii) transfers every $p$-anchor in
\cref{thm:Lp} to estimates for a different outer mean.  For example, if
$p\leq t\leq\infty$, then
\[
 \frac{\cTau}{\nu_p(\tau)}
 \leq\Ssk_{L_p,t}(\tau)
 \leq2^{1/p-1/t}\frac{\cTau}{\nu_p(\tau)}.
\]
For $1\leq t\leq p$, the inequalities are
\[
 2^{1/p-1/t}\frac{\cTau}{\nu_p(\tau)}
 \leq\Ssk_{L_p,t}(\tau)
 \leq\frac{\cTau}{\nu_p(\tau)}.
\]
These estimates retain the exact value at $t=p$ and avoid a separate
nonlinear optimization for every outer exponent.

\section{Sharp bounds and uniform nonsquareness}\label{sec:unsquare}

The next theorem identifies the correct universal scale.  Its lower bound
does not come from either reverse triangle inequality separately; it comes
from applying the skew transform twice.

\begin{theorem}\label{thm:sharp-chain}
For every $\tau\geq0$ and $1\leq t\leq\infty$,
\begin{equation}\label{eq:sharp-chain}
  \rTau
  \leq \Ssk_{X,t}(\tau)
  \leq \hTau
  \leq \Jsk_t[\tau,X]
  \leq 1+\tau.
\end{equation}
Every bound in \eqref{eq:sharp-chain} is sharp over the class of nonzero
real Banach spaces.
\end{theorem}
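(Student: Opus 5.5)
We prove the four inequalities of \eqref{eq:sharp-chain} in order and then address sharpness.

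\emph{Lower bound.} The idea is to apply the skew transform twice. For $x,y\in\SX$ put $u=x+\tau y$ and $v=\tau x-y$. Then
\[
  u+\tau v=(1+\tau^2)x,\qquad \tau u-v=(1+\tau^2)y,
\]
which is the identity $A_\tau^2=\cTau I_2$. The triangle inequality gives $\cTau=\norm{u+\tau v}\leq\norm{u}+\tau\norm{v}$ and $\cTau=\norm{\tau u-v}\leq\tau\norm{u}+\norm{v}$. Adding these yields $(1+\tau)(\norm{u}+\norm{v})\geq 2\cTau$, so
\[
  \M_1(\norm{u},\norm{v})\geq \rTau .
\]
Since $\M_t\geq\M_1$ by \eqref{eq:mean-comparison}, taking the infimum over $x,y$ gives $\Ssk_{X,t}(\tau)\geq\rTau$.

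\emph{Middle and upper bounds.} Fix $x\in\SX$ and test the pairs $(x,x)$ and $(x,-x)$. The pair $(x,x)$ gives $\norm{x+\tau x}=1+\tau$ and $\norm{\tau x-x}=\abs{1-\tau}$, so its profile value is $\M_t(1+\tau,\abs{1-\tau})=\hTau$. Hence $\Ssk_{X,t}(\tau)\leq\hTau\leq\Jsk_t[\tau,X]$. For the last inequality, the triangle inequality gives $\norm{x+\tau y}\leq1+\tau$ and $\norm{\tau x-y}\leq1+\tau$, so every profile value is at most $1+\tau$, and therefore $\Jsk_t[\tau,X]\leq1+\tau$.

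\emph{Sharpness.} \Cref{cor:distinguished}(i) shows that in $\ell_1^2$ or $\ell_\infty^2$ (or any $L_1$, $L_\infty$ of dimension at least two) one has $\Ssk=\rTau$ and $\Jsk=1+\tau$, so the two outer bounds are attained for every $t$? The corollary is stated only for outer exponent $t=p$, so for general $t$ this needs more work. For $\Jsk$, one checks in $\ell_\infty^2$ directly: $x=(1,1)$ and $y=(1,-1)$ give $\norm{x+\tau y}_\infty=\norm{\tau x-y}_\infty=1+\tau$, so the bound $1+\tau$ is attained for every $t$.

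For $\Ssk$, I would take $\ell_\infty^2$ with $x=(1,a)$ and $y=(b,1)$ and tune $a,b$ so that $\norm{x+\tau y}=\norm{\tau x-y}=\rTau$. Equality in both triangle inequalities of the lower-bound argument is the guide for this choice. I expect this to be the main obstacle. If equal norms cannot be achieved in the plane, the fallback is to take $x=af+bg$, $y=bf-ag$ as in \eqref{eq:symmetrizing-vectors}, which balances the two norms, and then use that $\M_t$ of equal numbers is independent of $t$. The $\ell_1$ or $\ell_\infty$ minimum-modulus vector has $\norm{A_\tau z}=\rTau$ by \cref{cor:distinguished}(i), and the balancing construction gives both coordinates equal norm; this makes the bound $\rTau$ attained for all $t$.

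For the two middle inequalities, sharpness means that equality can occur. \Cref{thm:hilbert} shows that a Hilbert space gives $\Ssk=\hTau$ for $t\leq2$ and $\Jsk=\hTau$ for $t\geq2$. For the remaining ranges of $t$, a one-dimensional space gives both equalities trivially, since there $y=\pm x$; both choices give profile value $\hTau$. So $X=\R$ realizes equality in both middle inequalities for all $t$.
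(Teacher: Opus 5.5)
Your proof is correct and follows the paper's argument: the same double application of the skew transform (via $A_\tau^2=(1+\tau^2)I_2$) for the lower bound, the test pair $(x,x)$ and the triangle inequality for the middle and upper bounds, and a one-dimensional space for sharpness of the two middle inequalities. For the outer bounds your hedging is unnecessary. The balancing construction in $\ell_1^2$ produces exactly the paper's explicit pair $x=(e_1+\tau e_2)/(1+\tau)$, $y=(\tau e_1-e_2)/(1+\tau)$, which gives $x+\tau y=\rTau e_1$ and $\tau x-y=\rTau e_2$. Your $\ell_\infty^2$ ansatz also works, with $a=-b=(1-\tau)/(1+\tau)$, giving $x+\tau y=\rTau(1,1)$ and $\tau x-y=\rTau(1,-1)$.
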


\begin{proof}
Fix $x,y\in\SX$ and write
\[
  u=x+\tau y,
  \qquad v=\tau x-y,
  \qquad a=\norm{u},\quad b=\norm{v}.
\]
The involution identity \eqref{eq:A-intro} reads
\begin{equation}\label{eq:inverse-pair}
  u+\tau v=\cTau x,
  \qquad
  \tau u-v=\cTau y.
\end{equation}
Consequently,
\[
  \cTau\leq a+\tau b,
  \qquad
  \cTau\leq\tau a+b.
\]
Adding these inequalities and using the monotonicity of power means gives
\[
  \M_t(a,b)\geq\M_1(a,b)=\frac{a+b}{2}
  \geq\frac{\cTau}{1+\tau}=\rTau.
\]
Taking the infimum proves the first inequality.

Choosing $y=x$ gives the pair of norms $1+\tau,\abs{1-\tau}$, and hence
$\Ssk_{X,t}(\tau)\leq\hTau\leq\Jsk_t[\tau,X]$.  Finally, both entries in
the defining power mean are at most $1+\tau$, which proves the last bound.

It remains to verify sharpness.  In a one-dimensional real space every
pair of unit vectors is $(x,x)$ or $(x,-x)$, so
$\Ssk_{X,t}(\tau)=\Jsk_t[\tau,X]=\hTau$; this makes both middle bounds
sharp.  For the lower bound, first suppose $0\leq\tau\leq1$ and take
$X=\ell_1^2$.  With its canonical basis $(e_1,e_2)$, put
\[
 x=\frac{e_1+\tau e_2}{1+\tau},
 \qquad
 y=\frac{\tau e_1-e_2}{1+\tau}.
\]
Then $x,y\in\SX$ and
\[
 x+\tau y=\rTau e_1,
 \qquad
 \tau x-y=\rTau e_2.
\]
Thus $\Ssk_{\ell_1^2,t}(\tau)=\rTau$.  The case $\tau>1$ follows from
\cref{prop:structure}(ii).  Finally, in $\ell_1^2$ the choice
$x=e_1,y=e_2$ makes both output norms equal to $1+\tau$, proving sharpness
of the rightmost bound.
\end{proof}

\begin{remark}\label{rem:involution-vs-classical}
For the classical pair $x+\tau y,x-\tau y$, independent scaling leads to
an even convex sum without further modification.  For the skew pair, that
property fails because the coefficient $\tau$ changes sides.  Identity
\eqref{eq:inverse-pair} is the replacement: it gives the sharp lower scale
$\rTau$, but later arguments requiring homogeneous normalization must first
be symmetrized.  This distinction is responsible for the main proof in
\cref{sec:normal}.
\end{remark}

\begin{corollary}\label{cor:t-infty-limit}
For every $\tau\geq0$,
\[
  \lim_{t\to\infty}\Ssk_{X,t}(\tau)=\Ssk_{X,\infty}(\tau),
  \qquad
  \lim_{t\to\infty}\Jsk_t[\tau,X]=\Jsk_\infty[\tau,X].
\]
\end{corollary}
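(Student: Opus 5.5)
The plan is to squeeze the finite-$t$ constants between the $t=\infty$ constant and a multiple of it that tends to $1$, using the power-mean comparison in \cref{prop:structure}(iii) with $s=\infty$ and $r=t$. For every $1\leq t<\infty$ this gives
\[
 \Ssk_{X,t}(\tau)\leq\Ssk_{X,\infty}(\tau)\leq2^{1/t}\Ssk_{X,t}(\tau),
\]
and the analogous two-sided inequality for $\Jsk_t[\tau,X]$. Rearranging, I obtain
\[
 2^{-1/t}\Ssk_{X,\infty}(\tau)\leq\Ssk_{X,t}(\tau)\leq\Ssk_{X,\infty}(\tau),
\]
and the same bounds with $\Jsk$ in place of $\Ssk$.

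Next I need the limit quantities to be finite, so that the squeeze is meaningful. By \cref{thm:sharp-chain}, both $\Ssk_{X,\infty}(\tau)$ and $\Jsk_\infty[\tau,X]$ lie in the interval $[\rTau,1+\tau]$, and in particular they are finite. Since $2^{-1/t}\to1$ as $t\to\infty$, both the lower and upper bounds converge to the $t=\infty$ value. The squeeze theorem then gives both limits.

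I also note that \cref{prop:structure}(iii) shows $t\mapsto\Ssk_{X,t}(\tau)$ is nondecreasing, so the limit exists a priori; the squeeze, however, already identifies it directly. There is no real obstacle here. The only point to check is that the case $s=\infty$ is included in (iii) with the convention $1/\infty=0$, which it is by \eqref{eq:mean-comparison}.
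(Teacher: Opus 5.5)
Your proposal is correct and is essentially the paper's own argument: monotonicity in $t$ combined with $\M_\infty(a,b)\leq 2^{1/t}\M_t(a,b)$ (that is, \cref{prop:structure}(iii) with $s=\infty$) gives $2^{-1/t}\Ssk_{X,\infty}(\tau)\leq\Ssk_{X,t}(\tau)\leq\Ssk_{X,\infty}(\tau)$, and the analogous bounds for $\Jsk$. Letting $t\to\infty$ then yields both limits. Your finiteness check via the bound $1+\tau$ is a harmless extra detail.
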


\begin{proof}
The quantities increase with $t$, while
$\M_\infty(a,b)\leq2^{1/t}\M_t(a,b)$.  Apply infimum and supremum and let
$t\to\infty$.
\end{proof}

\subsection{The strict lower gap}

Recall that $X$ is uniformly nonsquare if there is $\eta>0$ such that for
all $x,y\in\SX$,
\begin{equation}\label{eq:UNS-definition}
 \min\{\norm{x+y},\norm{x-y}\}\leq2(1-\eta).
\end{equation}
Equivalently, its James constant
\[
 J(X):=\sup_{x,y\in\SX}
       \min\{\norm{x+y},\norm{x-y}\}
\]
is strictly smaller than $2$ \cite{James}.  The lower bound in
\cref{thm:sharp-chain} detects exactly this property.

\begin{theorem}\label{thm:gap-characterization}
Let $\tau>0$ and $1\leq t\leq\infty$, and exclude only the pair
$(t,\tau)=(1,1)$.  Then the following are equivalent:
\begin{enumerate}[label=\textup{(\roman*)}]
 \item $X$ is uniformly nonsquare;
 \item $\displaystyle
       \Ssk_{X,t}(\tau)>\frac{1+\tau^2}{1+\tau}$.
\end{enumerate}
Consequently, for any fixed $t>1$, strictness at one positive value of
$\tau$ is equivalent to strictness at every positive value of $\tau$.
\end{theorem}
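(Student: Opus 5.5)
We prove the two implications separately. In both directions we use the involution identity \eqref{eq:inverse-pair} together with the chain $\M_t\geq\M_1\geq\rTau$ from \cref{thm:sharp-chain}. The final sentence of the theorem then follows at once, because (i) does not involve $\tau$. By \cref{prop:structure}(ii) we could restrict to $0<\tau\leq1$, but the argument below does not need this.

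\emph{(i)$\Rightarrow$(ii).} We argue by contradiction. Assume $X$ is uniformly nonsquare but $\Ssk_{X,t}(\tau)=\rTau$.

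\begin{enumerate}[label=(\alph*)]
\item Take $x_n,y_n\in\SX$ with $\M_t(a_n,b_n)\to\rTau$, where $u_n=x_n+\tau y_n$, $v_n=\tau x_n-y_n$, $a_n=\norm{u_n}$ and $b_n=\norm{v_n}$.

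\item Since $\M_t(a_n,b_n)\geq\M_1(a_n,b_n)\geq\rTau$, we get $a_n+b_n\to 2\rTau$.

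\item We now show $a_n,b_n\to\rTau$. This is the step needing care.
\begin{itemize}
\item If $t>1$, the strict power-mean inequality, with compactness of the bounded pairs $(a_n,b_n)$, forces $a_n-b_n\to0$.
\item If $t=1$ and $\tau\neq1$, use the two constraints $a_n+\tau b_n\geq\cTau$ and $\tau a_n+b_n\geq\cTau$. Their sum is $(1+\tau)(a_n+b_n)\to 2\cTau$, so both slacks tend to $0$. The linear system $a+\tau b=\cTau=\tau a+b$ has determinant $1-\tau^2\neq0$, and its unique solution is $a=b=\rTau$.
\end{itemize}
This is exactly where the excluded pair $(1,1)$ fails: there the two constraints coincide.

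\item In either case $a_n,b_n\to\rTau>0$. Then \eqref{eq:inverse-pair} gives
\[
\norm{u_n+\tau v_n}=\cTau,\qquad \norm{\tau u_n-v_n}=\cTau,
\]
and both right-hand sides are asymptotically $\norm{u_n}+\tau\norm{v_n}$ and $\tau\norm{u_n}+\norm{v_n}$, respectively. Put $U_n=u_n/a_n$ and $V_n=v_n/b_n$. Then
\[
\norm{U_n+\tau V_n}\to1+\tau,\qquad \norm{\tau U_n-V_n}\to1+\tau.
\]

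\item Convert these to James-type information. For $\tau\leq1$ write $U+\tau V=\tau(U+V)+(1-\tau)U$, so $\norm{U+\tau V}\leq\tau\norm{U+V}+1-\tau$. Hence $\norm{U_n+V_n}\to2$. The same argument applied to the second relation gives $\norm{U_n-V_n}\to2$. For $\tau>1$, first divide by $\tau$.

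\item Both limits equal $2$, so $J(X)=2$, contradicting uniform nonsquareness.
\end{enumerate}

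\emph{(ii)$\Rightarrow$(i).} We prove the contrapositive: if $X$ is not uniformly nonsquare, then $\Ssk_{X,t}(\tau)\leq\rTau$, hence equality holds. The plan is to imitate the $\ell_1^2$ extremal pair from the proof of \cref{thm:sharp-chain}.

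\begin{enumerate}[label=(\alph*)]
\item Use the classical fact (James) that failure of uniform nonsquareness means $\ell_1^2$ is almost isometrically finitely representable. Concretely, for each $\varepsilon>0$ there are $e,f\in\SX$ with
\[
(1-\varepsilon)(\abs{\alpha}+\abs{\beta})\leq\norm{\alpha e+\beta f}\leq\abs{\alpha}+\abs{\beta}
\quad\text{for all scalars }\alpha,\beta.
\]
This can be obtained from $\norm{x+y},\norm{x-y}\approx2$ by setting $e,f$ to normalized versions of $(x+y)/2$ and $(x-y)/2$, using a convexity argument on the segment.

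\item Put
\[
x=\frac{e+\tau f}{1+\tau},\qquad y=\frac{\tau e-f}{1+\tau}.
\]
Their norms lie in $[1-\varepsilon,1]$. Normalize them; by the triangle inequality this changes $x+\tau y$ and $\tau x-y$ by $O(\varepsilon)$.

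\item Before normalization, $x+\tau y=\rTau e$ and $\tau x-y=\rTau f$, whose norms are both $\rTau$. Using \eqref{eq:mean-lipschitz}, the power mean for the normalized pair is therefore at most $\rTau+O(\varepsilon)$.

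\item Let $\varepsilon\to0$ to conclude $\Ssk_{X,t}(\tau)\leq\rTau$.
\end{enumerate}

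The main obstacle is step (c) of the first implication: extracting $a_n,b_n\to\rTau$ from an infimum that only forces $\M_t(a_n,b_n)\to\rTau$. It is also the only place where the exclusion of $(t,\tau)=(1,1)$ is needed.
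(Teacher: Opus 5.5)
Your overall structure is sound and matches the paper's, but the justification you give in step (a) of (ii)$\Rightarrow$(i) is wrong as written. Suppose $x,y\in\SX$ with $\norm{x+y},\norm{x-y}\to2$, and put $e=(x+y)/\norm{x+y}$, $f=(x-y)/\norm{x-y}$. Then $e+f\approx x$ and $e-f\approx y$, so $\norm{\alpha e+\beta f}\approx\max\{\abs{\alpha},\abs{\beta}\}$. That is an almost isometric $\ell_\infty^2$ basis, not an $\ell_1^2$ one; try it in $\ell_1^2$ with $x=e_1$, $y=e_2$. With this choice $\norm{e+\tau f}\approx1$ for $\tau\leq1$, so your test vectors $(e+\tau f)/(1+\tau)$ have norm near $1/(1+\tau)$, and steps (b)--(c) break down.

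The fix is to take $e=x$ and $f=y$ themselves. Assume $\norm{x\pm y}\geq2-\delta$ and $\abs{\alpha}\geq\abs{\beta}$, and let $\sigma$ be the sign of $\alpha\beta$ (either sign if $\beta=0$). Then $\alpha x+\beta y=\alpha(x+\sigma y)-(\sigma\alpha-\beta)y$ with $\abs{\sigma\alpha-\beta}=\abs{\alpha}-\abs{\beta}$. Hence $\norm{\alpha x+\beta y}\geq(1-\delta)(\abs{\alpha}+\abs{\beta})$.

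Since the lemma you state is classical and correctly formulated, this is a repairable slip rather than a missing idea. You only need it for the two combinations $e+\tau f$ and $\tau e-f$. The paper gets exactly these directly from $\norm{p_n+\tau q_n}\geq\norm{p_n+q_n}-(1-\tau)$ for $\tau\leq1$, and then invokes \cref{prop:structure}(ii). Your lemma-based version has the mild advantage of handling all $\tau>0$ at once. Once (a) is repaired, your test pair, normalization and limit coincide with the paper's.

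The (i)$\Rightarrow$(ii) direction is correct and follows the paper up to its last step. Your case split in (c) is valid and locates the excluded pair $(1,1)$ correctly. For every $t>1$, including $t=\infty$ and $\tau=1$, you use the strict mean inequality plus compactness. For $t=1$, $\tau\neq1$, you use the nonsingular linear system. The paper instead uses the linear system for $0<\tau<1$ at all $t$ and strict convexity only at $\tau=1$; this is only a bookkeeping difference.

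At the end, the paper chooses norming functionals $f_n,g_n$ for $u_n+\tau v_n$ and $\tau u_n-v_n$. It reads off $f_n(p_n),f_n(q_n)\to1$, $g_n(p_n)\to1$, $g_n(q_n)\to-1$. You instead use the convex splitting $U+\tau V=\tau(U+V)+(1-\tau)U$, and its analogue after dividing by $\tau$ when $\tau>1$. This avoids Hahn--Banach entirely and is just as rigorous. It is simply the reverse of the triangle-inequality estimate the paper uses in the other direction.
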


\begin{proof}
By \cref{prop:structure}(ii), it suffices to consider $0<\tau\leq1$.  We
first prove the contrapositive of (i)$\Rightarrow$(ii).  Suppose that $X$
is not uniformly nonsquare.  There are $p_n,q_n\in\SX$ such that
\begin{equation}\label{eq:square-sequences}
 \norm{p_n+q_n}\longrightarrow2,
 \qquad
 \norm{p_n-q_n}\longrightarrow2.
\end{equation}
For $0<\tau\leq1$, the triangle inequality and \eqref{eq:square-sequences}
give
\begin{align*}
 \norm{p_n+\tau q_n}
 &\geq\norm{p_n+q_n}-(1-\tau)\longrightarrow1+\tau,\\
 \norm{\tau p_n-q_n}
 &\geq\norm{p_n-q_n}-(1-\tau)\longrightarrow1+\tau.
\end{align*}
The opposite inequalities are automatic, so both limits equal $1+\tau$.
Put
\[
  x_n=\frac{p_n+\tau q_n}{1+\tau},
  \qquad
  y_n=\frac{\tau p_n-q_n}{1+\tau}.
\]
Then $\norm{x_n},\norm{y_n}\to1$, while the involution identity gives
\begin{equation}\label{eq:almost-lower-pair}
 x_n+\tau y_n=\rTau p_n,
 \qquad
 \tau x_n-y_n=\rTau q_n.
\end{equation}
Normalize $x_n$ and $y_n$ separately.  Since the normalization errors tend
to zero, \eqref{eq:almost-lower-pair} produces unit pairs whose two output
norms converge to $\rTau$.  Therefore
$\Ssk_{X,t}(\tau)\leq\rTau$, and equality follows from
\cref{thm:sharp-chain}.

Conversely, suppose $\Ssk_{X,t}(\tau)=\rTau$.  Choose $x_n,y_n\in\SX$ and
set
\[
 u_n=x_n+\tau y_n,
 \quad v_n=\tau x_n-y_n,
 \quad a_n=\norm{u_n},
 \quad b_n=\norm{v_n},
\]
so that $\M_t(a_n,b_n)\to\rTau$.  The proof of
\cref{thm:sharp-chain} shows
\begin{equation}\label{eq:sum-convergence}
 \rTau\leq\frac{a_n+b_n}{2}
 \leq\M_t(a_n,b_n)\longrightarrow\rTau.
\end{equation}
It also gives the nonnegative errors
\begin{align*}
 e_n&:=a_n+\tau b_n-\cTau\geq0,\\
 d_n&:=\tau a_n+b_n-\cTau\geq0,
\end{align*}
whose sum is
\[
 e_n+d_n=(1+\tau)(a_n+b_n)-2\cTau\longrightarrow0.
\]
If $0<\tau<1$, subtracting the two relations shows
$(1-\tau)(a_n-b_n)=e_n-d_n\to0$.  Thus
$a_n,b_n\to\rTau$.  If $\tau=1$ and $1<t<\infty$, the same conclusion
follows from \eqref{eq:sum-convergence} and the strict convexity of the
scalar $\ell_t^2$ norm: the only nonnegative pair with arithmetic mean and
$t$-mean both equal to $1$ is $(1,1)$.  For $t=\infty$, it follows directly
from $\max\{a_n,b_n\}\to1$ and $a_n+b_n\geq2$.  This is the point at which
the excluded case $(t,\tau)=(1,1)$ would fail.

For large $n$, define $p_n=u_n/a_n$ and $q_n=v_n/b_n$.  Choose
$f_n,g_n\in S_{X^*}$ norming the two identities in
\eqref{eq:inverse-pair}, so that
\begin{align*}
 \cTau
 &=f_n(u_n+\tau v_n)
   =a_n f_n(p_n)+\tau b_n f_n(q_n),\\
 \cTau
 &=g_n(\tau u_n-v_n)
   =\tau a_n g_n(p_n)-b_n g_n(q_n).
\end{align*}
Since $e_n,d_n\to0$ and $a_n,b_n\to\rTau>0$, we obtain
\[
 f_n(p_n)\to1,
 \quad f_n(q_n)\to1,
 \qquad
 g_n(p_n)\to1,
 \quad g_n(q_n)\to-1.
\]
Hence
\[
 \norm{p_n+q_n}\geq f_n(p_n+q_n)\longrightarrow2,
 \qquad
 \norm{p_n-q_n}\geq g_n(p_n-q_n)\longrightarrow2.
\]
Thus $J(X)=2$, so $X$ is not uniformly nonsquare.  This proves the reverse
contrapositive and completes the equivalence.
\end{proof}

\begin{remark}\label{rem:exception}
When $t=1$ and $\tau=1$, every Banach space satisfies
\[
 \Ssk_{X,1}(1)\leq\M_1(\norm{2x},0)=1,
\]
and the universal lower bound is also $1$.  Thus no geometric information
can be recovered in this single degenerate case.  If $t=1$ but
$\tau\neq1$, the two distinct linear inequalities in the proof force
$a_n-b_n\to0$, so the characterization remains valid.
\end{remark}

\begin{corollary}\label{cor:gap-reflexivity}
If $(t,\tau)\neq(1,1)$, $\tau>0$, and
\[
 \Ssk_{X,t}(\tau)>\frac{1+\tau^2}{1+\tau},
\]
then $X$ is reflexive.
\end{corollary}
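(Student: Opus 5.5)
The corollary follows by chaining \cref{thm:gap-characterization} with James's classical theorem that uniformly nonsquare Banach spaces are reflexive \cite{James}.

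\textbf{Step 1: the hypotheses match.} We have $\tau>0$, $(t,\tau)\neq(1,1)$, and the strict inequality
\[
\Ssk_{X,t}(\tau)>\frac{1+\tau^2}{1+\tau}=\rTau,
\]
which is exactly condition (ii) of \cref{thm:gap-characterization}. The implication (ii)$\Rightarrow$(i) of that theorem then yields that $X$ is uniformly nonsquare. Equivalently, $J(X)<2$: there is $\eta>0$ with $\min\{\norm{x+y},\norm{x-y}\}\leq2(1-\eta)$ for all $x,y\in\SX$.

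\textbf{Step 2: invoke James's theorem.} A uniformly nonsquare Banach space is reflexive, so $X$ is reflexive.

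\textbf{Alternative route, if one wants to avoid the nonsquareness terminology.} One can argue through super-reflexivity: uniformly nonsquare spaces admit an equivalent uniformly convex norm, and the Milman--Pettis theorem then gives reflexivity. This is heavier than needed. The direct citation of James suffices, since uniform nonsquareness is defined in the paper exactly as in James's work through \eqref{eq:UNS-definition}.

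\textbf{Where the work lies.} There is no real obstacle here. All the analytic content sits in the converse half of \cref{thm:gap-characterization}, namely that $\Ssk_{X,t}(\tau)=\rTau$ forces $J(X)=2$, and that is already proved. The only point needing care is that the excluded pair $(1,1)$ and the restriction $\tau>0$ are carried over verbatim, so the theorem applies without modification.
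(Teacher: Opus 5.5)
Your proof is correct and is exactly the paper's argument: apply the implication (ii)$\Rightarrow$(i) of \cref{thm:gap-characterization} to get uniform nonsquareness, then invoke James's theorem. One small correction to your closing commentary: the half of that theorem you actually use is its contrapositive ``$J(X)=2$ forces $\Ssk_{X,t}(\tau)=\rTau$'', which is the explicit construction with $x_n=(p_n+\tau q_n)/(1+\tau)$ and does not even need the exclusion of $(1,1)$; the half you named, ``$\Ssk_{X,t}(\tau)=\rTau$ forces $J(X)=2$'', is the direction (i)$\Rightarrow$(ii), which you do not use.
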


\begin{proof}
By \cref{thm:gap-characterization}, $X$ is uniformly nonsquare.  James's
theorem states that every uniformly nonsquare Banach space is reflexive
\cite{James}.
\end{proof}

\begin{example}\label{ex:gap-models}
For $1<p<\infty$, $L_p$ is uniformly nonsquare \cite{Clarkson}, and hence
\[
 \frac{\cTau}{\nu_p(\tau)}>\frac{\cTau}{1+\tau}
 \qquad(\tau>0).
\]
Equivalently, $\nu_p(\tau)<1+\tau$.  At $p=1$ and $p=\infty$, equality
holds.  Thus the abstract gap theorem recovers, through
\cref{thm:Lp}, the strict separation between the interior and the endpoints
of the Lebesgue scale.
\end{example}

\section{James-type estimates and uniform normal structure}\label{sec:normal}

For $0\leq\varepsilon\leq1$, define the classical James modulus
\begin{equation}\label{eq:James-modulus}
 J(\varepsilon,X)
 :=\sup_{x,y\in\SX}
   \min\{\norm{x+\varepsilon y},\norm{x-\varepsilon y}\}.
\end{equation}
Theorem~3.2 of Zuo and Tang \cite{ZuoTang} relates the usual power-mean
Sch\"affer constant to \eqref{eq:James-modulus}.  A direct repetition does
not work for the skew constant: the function
\[
 (\alpha,\beta)\longmapsto
 \norm{\alpha x+\tau\beta y}^t
 +\norm{\tau\alpha x-\beta y}^t
\]
need not be even in either variable.  The next lemma restores evenness by
adding the reversed orientation.

For $z,w\in X$, $\tau\geq0$, and $1\leq t<\infty$, put
\begin{align}
 \Gamma_{\tau,t}(z,w)
 :=&\ \norm{z+\tau w}^t+\norm{z-\tau w}^t\notag\\
   &+\norm{w+\tau z}^t+\norm{w-\tau z}^t.
   \label{eq:Gamma}
\end{align}

\begin{lemma}\label{lem:lifting}
Let $z,w\in X\setminus\{0\}$ and
$m=\min\{\norm{z},\norm{w}\}$.  Then
\begin{equation}\label{eq:lifting}
 \Gamma_{\tau,t}(z,w)
 \geq4m^t\Ssk_{X,t}(\tau)^t.
\end{equation}
\end{lemma}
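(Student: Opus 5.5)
The plan is to factor out the norms and use the fact that $\Gamma_{\tau,t}$, although built from skew pairs, splits into two skew profiles of the kind appearing in \eqref{eq:def-S-intro}. Write $z=\alpha x$ and $w=\beta y$ with $x,y\in\SX$, $\alpha=\norm{z}$, $\beta=\norm{w}$, so that $m=\min\{\alpha,\beta\}$. For real $\alpha,\beta$ define
\[
 F(\alpha,\beta):=\norm{\alpha x+\tau\beta y}^t+\norm{\tau\alpha x-\beta y}^t
 +\norm{\alpha x-\tau\beta y}^t+\norm{\tau\alpha x+\beta y}^t .
\]
Since $\norm{w-\tau z}=\norm{\tau z-w}$, we have $\Gamma_{\tau,t}(z,w)=F(\norm{z},\norm{w})$. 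The four terms group into the skew profile of the pair $(x,y)$ and the skew profile of the pair $(x,-y)$.

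The key step, and the only real point (it is the symmetrization mentioned in \cref{rem:involution-vs-classical}), is to show that $F$ is even and convex in each variable separately.

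\emph{Evenness in $\beta$.} Replacing $\beta$ by $-\beta$ interchanges the first term with the third and the second with the fourth.

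\emph{Evenness in $\alpha$.} Replacing $\alpha$ by $-\alpha$ gives
\[
 \norm{-\alpha x+\tau\beta y}=\norm{\alpha x-\tau\beta y},
 \qquad
 \norm{-\tau\alpha x-\beta y}=\norm{\tau\alpha x+\beta y},
\]
so the terms are again permuted.

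\emph{Convexity.} Each summand is $s\mapsto\norm{p+sq}^t$ for fixed vectors $p,q$. This is a convex function composed with the nondecreasing convex map $r\mapsto r^t$ on $[0,\infty)$, since $t\geq1$, and is therefore convex.

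An even convex function of one real variable is nondecreasing on $[0,\infty)$. Fix $\beta\geq0$. Because $\alpha\geq m\geq0$, we get $F(\alpha,\beta)\geq F(m,\beta)$. Now fix the first argument at $m$ and use $\beta\geq m$ in the same way to obtain $F(m,\beta)\geq F(m,m)$. Hence
\[
 \Gamma_{\tau,t}(z,w)\geq F(m,m).
\]

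Finally, homogeneity of the norm gives
\[
 F(m,m)=m^t\Bigl[\bigl(\norm{x+\tau y}^t+\norm{\tau x-y}^t\bigr)
 +\bigl(\norm{x+\tau(-y)}^t+\norm{\tau x-(-y)}^t\bigr)\Bigr].
\]
Both $(x,y)$ and $(x,-y)$ lie in $\SX\times\SX$. By \eqref{eq:def-S-intro}, each bracket equals $2\M_t(\cdot,\cdot)^t$ and is therefore at least $2\Ssk_{X,t}(\tau)^t$. This yields $\Gamma_{\tau,t}(z,w)\geq4m^t\Ssk_{X,t}(\tau)^t$, which is \eqref{eq:lifting}.

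I expect no obstacle beyond verifying the evenness in $\alpha$. That step is exactly where the reversed-orientation terms in \eqref{eq:Gamma} are needed: without them, $\alpha\mapsto-\alpha$ would not permute the summands, and the monotone reduction to $F(m,m)$ would fail.
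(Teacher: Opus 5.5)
Your proof is correct and follows essentially the paper's argument: evenness and convexity in each scaling variable give $\Gamma_{\tau,t}(z,w)\geq m^t\Gamma_{\tau,t}(p,q)$ for unit vectors $p,q$, and the four terms then split into two skew profiles, each at least $2\Ssk_{X,t}(\tau)^t$. The differences are cosmetic: you check evenness of the full four-term sum rather than of the two $\pm$ pairs separately, and you pair the terms as the profiles of $(x,y)$ and $(x,-y)$ instead of $(p,q)$ and $(q,p)$, which involve the same four norms.
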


\begin{proof}
Write $z=\alpha p,w=\beta q$ with $p,q\in\SX$ and
$\alpha,\beta>0$.  For fixed $\beta$, each of
\begin{align*}
 s&\longmapsto\norm{sp+\tau\beta q}^t
                  +\norm{sp-\tau\beta q}^t,\\
 s&\longmapsto\norm{\beta q+\tau sp}^t
                  +\norm{\beta q-\tau sp}^t
\end{align*}
is even and convex.  It is therefore nondecreasing on $[0,\infty)$.  The
same statement holds when the roles of the two scaling variables are
interchanged.  Since $\alpha,\beta\geq m$,
\[
 \Gamma_{\tau,t}(\alpha p,\beta q)
 \geq\Gamma_{\tau,t}(mp,mq)
 =m^t\Gamma_{\tau,t}(p,q).
\]
For unit $p,q$, the four terms split into the two skew orientations:
\begin{align*}
 \Gamma_{\tau,t}(p,q)
  =&\ \bigl(\norm{p+\tau q}^t+\norm{\tau p-q}^t\bigr)\\
   &+\bigl(\norm{q+\tau p}^t+\norm{\tau q-p}^t\bigr).
\end{align*}
Each parenthesis is at least $2\Ssk_{X,t}(\tau)^t$, proving
\eqref{eq:lifting}.
\end{proof}

\begin{theorem}\label{thm:James-estimate}
Let $0\leq\tau,\varepsilon\leq1$ and $1\leq t<\infty$.  Set
\begin{equation}\label{eq:AB}
 A_{\tau,\varepsilon}
 :=1+\tau+\varepsilon(1-\tau),
 \qquad
 B_{\tau,\varepsilon}
 :=1-\tau+\varepsilon(1+\tau).
\end{equation}
Then
\begin{equation}\label{eq:James-main}
 2\Ssk_{X,t}(\tau)^tJ(\varepsilon,X)^t
 \leq A_{\tau,\varepsilon}^t+B_{\tau,\varepsilon}^t.
\end{equation}
Equivalently,
\begin{equation}\label{eq:James-mean}
 \Ssk_{X,t}(\tau)J(\varepsilon,X)
 \leq\M_t(A_{\tau,\varepsilon},B_{\tau,\varepsilon}).
\end{equation}
The constant in \eqref{eq:James-main} is sharp: equality holds in
$X=\ell_1^2$ when $\tau=\varepsilon=1$.
\end{theorem}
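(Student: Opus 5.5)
The plan is to run the Zuo--Tang argument through \cref{lem:lifting}, with a suitable pair $z,w$ built from a test pair for the James modulus. Fix $x,y\in\SX$ and put
\[
 z=x+\varepsilon y,\qquad w=x-\varepsilon y,\qquad m=\min\{\norm{z},\norm{w}\}.
\]
If $m=0$ there is nothing to prove for this pair, since it contributes $0$ to the supremum defining $J(\varepsilon,X)$. This can only happen when $\varepsilon=1$ and $y=\pm x$. So assume $z,w\neq0$. Then \cref{lem:lifting} gives the lower bound
\[
 \Gamma_{\tau,t}(z,w)\geq 4m^t\Ssk_{X,t}(\tau)^t.
\]

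The key step is to bound the four terms of $\Gamma_{\tau,t}(z,w)$ from above by the triangle inequality. Expanding each term gives
\begin{align*}
 z+\tau w&=(1+\tau)x+\varepsilon(1-\tau)y, &
 w+\tau z&=(1+\tau)x-\varepsilon(1-\tau)y,\\
 z-\tau w&=(1-\tau)x+\varepsilon(1+\tau)y, &
 w-\tau z&=(1-\tau)x-\varepsilon(1+\tau)y.
\end{align*}
Since $0\leq\tau,\varepsilon\leq1$, all coefficients are nonnegative. Hence the first column has norms at most $A_{\tau,\varepsilon}$ and the second row at most $B_{\tau,\varepsilon}$. It follows that
\[
 \Gamma_{\tau,t}(z,w)\leq 2A_{\tau,\varepsilon}^t+2B_{\tau,\varepsilon}^t.
\]
Combining the two bounds yields $2m^t\Ssk_{X,t}(\tau)^t\leq A_{\tau,\varepsilon}^t+B_{\tau,\varepsilon}^t$. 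Taking the supremum of $m$ over $x,y\in\SX$ replaces $m$ by $J(\varepsilon,X)$, which gives \eqref{eq:James-main}. Taking $t$th roots and dividing by $2$ gives \eqref{eq:James-mean}.

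The one point needing care is the reason for symmetrizing. A single skew pair would not let us pull the scalings $\norm{z},\norm{w}$ down to $m$. The lemma's four-term $\Gamma$ handles this, and the four terms happen to pair up into exactly $A$- and $B$-type bounds. I expect the main (mild) obstacle to be checking this sign bookkeeping and the degenerate case $m=0$.

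For sharpness, take $X=\ell_1^2$ and $\tau=\varepsilon=1$. Then $\Ssk_{X,t}(1)=r_1=1$ by \cref{thm:sharp-chain}, and $J(1,\ell_1^2)=2$. The left side of \eqref{eq:James-main} is therefore $2\cdot2^t$, while $A_{1,1}=B_{1,1}=2$ makes the right side $2^{t+1}$. So equality holds.
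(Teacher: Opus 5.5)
Your proposal is correct and follows the paper's proof essentially verbatim. It uses the same choice $z=x+\varepsilon y$, $w=x-\varepsilon y$ in \cref{lem:lifting}, the same $A,B,A,B$ bounds on the four expanded vectors, the same supremum step, and the same $\ell_1^2$ sharpness check. One small slip: the norms bounded by $A_{\tau,\varepsilon}$ are those in the first \emph{row} ($z+\tau w$ and $w+\tau z$), not the first column, although your resulting bound $\Gamma_{\tau,t}(z,w)\leq 2A_{\tau,\varepsilon}^t+2B_{\tau,\varepsilon}^t$ is right.
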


\begin{proof}
Fix $x,y\in\SX$ and apply \cref{lem:lifting} to
\[
  z=x+\varepsilon y,
  \qquad
  w=x-\varepsilon y.
\]
If $z=0$ or $w=0$, the pointwise estimate below is immediate because its
left-hand side contains $\min\{\norm{z},\norm{w}\}^t=0$.  We may therefore
assume that both vectors are nonzero and invoke the lemma.
The four vectors in \eqref{eq:Gamma} are
\begin{align*}
 z+\tau w&=(1+\tau)x+\varepsilon(1-\tau)y,\\
 z-\tau w&=(1-\tau)x+\varepsilon(1+\tau)y,\\
 w+\tau z&=(1+\tau)x-\varepsilon(1-\tau)y,\\
 w-\tau z&=(1-\tau)x-\varepsilon(1+\tau)y.
\end{align*}
Their norms are bounded, in order, by
$A_{\tau,\varepsilon},B_{\tau,\varepsilon},
 A_{\tau,\varepsilon},B_{\tau,\varepsilon}$.  Consequently,
\begin{align*}
 4\Ssk_{X,t}(\tau)^t
 \min\{\norm{x+\varepsilon y},\norm{x-\varepsilon y}\}^t
 &\leq\Gamma_{\tau,t}(z,w)\\
 &\leq2A_{\tau,\varepsilon}^t
       +2B_{\tau,\varepsilon}^t.
\end{align*}
Take the supremum over $x,y\in\SX$ to obtain
\eqref{eq:James-main}.

For sharpness, $\Ssk_{\ell_1^2,t}(1)=1$ by
\cref{thm:sharp-chain}, while $J(1,\ell_1^2)=2$.  At
$\tau=\varepsilon=1$, both quantities in \eqref{eq:AB} equal $2$, so the
two sides of \eqref{eq:James-main} are equal to $2^{t+1}$.
\end{proof}

\begin{corollary}\label{cor:diagonal}
For $0\leq\tau\leq1$ and $1\leq t<\infty$,
\begin{equation}\label{eq:diagonal}
 2\Ssk_{X,t}(\tau)^tJ(\tau,X)^t
 \leq(1+2\tau-\tau^2)^t+(1+\tau^2)^t.
\end{equation}
For $t=\infty$, the limiting forms of
\eqref{eq:James-mean} and \eqref{eq:diagonal} are, respectively,
\begin{align*}
 \Ssk_{X,\infty}(\tau)J(\varepsilon,X)
 &\leq A_{\tau,\varepsilon},\\
 \Ssk_{X,\infty}(\tau)J(\tau,X)
 &\leq1+2\tau-\tau^2.
\end{align*}
\end{corollary}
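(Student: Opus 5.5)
The first claim, \eqref{eq:diagonal}, is simply \cref{thm:James-estimate} evaluated on the diagonal $\varepsilon=\tau$, so I would begin there. Setting $\varepsilon=\tau\in[0,1]$ in \eqref{eq:AB} gives
\[
 A_{\tau,\tau}=1+\tau+\tau(1-\tau)=1+2\tau-\tau^2,
 \qquad
 B_{\tau,\tau}=1-\tau+\tau(1+\tau)=1+\tau^2.
\]
Substituting these into \eqref{eq:James-main} yields \eqref{eq:diagonal} at once. No further argument is needed, since the hypotheses $0\leq\tau,\varepsilon\leq1$ of \cref{thm:James-estimate} hold.

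For the endpoint $t=\infty$ I would pass to the limit in \eqref{eq:James-mean} as $t\to\infty$ and treat the two sides separately.

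On the right-hand side, note that $A_{\tau,\varepsilon}-B_{\tau,\varepsilon}=2\tau(1-\varepsilon)\geq0$. Hence $\M_t(A_{\tau,\varepsilon},B_{\tau,\varepsilon})\to\max\{A_{\tau,\varepsilon},B_{\tau,\varepsilon}\}=A_{\tau,\varepsilon}$, using the standard convergence of two-point power means to the maximum.

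On the left-hand side, \cref{cor:t-infty-limit} gives $\Ssk_{X,t}(\tau)\to\Ssk_{X,\infty}(\tau)$, while $J(\varepsilon,X)$ does not depend on $t$. Letting $t\to\infty$ in \eqref{eq:James-mean} therefore gives $\Ssk_{X,\infty}(\tau)J(\varepsilon,X)\leq A_{\tau,\varepsilon}$.

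The diagonal limiting form then follows by taking $\varepsilon=\tau$, so that $A_{\tau,\tau}=1+2\tau-\tau^2$.

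The only point that needs care is that the limit is legitimate. \cref{thm:James-estimate} is stated only for finite $t$, and the $t=\infty$ constant is defined with a maximum rather than as a limit. This is exactly what \cref{cor:t-infty-limit} resolves. If one prefers to avoid the limit altogether, there is a more robust route. By \cref{prop:structure}(iii), $\Ssk_{X,\infty}(\tau)\leq2^{1/t}\Ssk_{X,t}(\tau)$, so \eqref{eq:James-mean} gives
\[
 \Ssk_{X,\infty}(\tau)J(\varepsilon,X)\leq2^{1/t}\M_t(A_{\tau,\varepsilon},B_{\tau,\varepsilon})\leq2^{1/t}A_{\tau,\varepsilon},
\]
and letting $t\to\infty$ finishes the argument.
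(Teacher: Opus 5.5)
Your proposal is correct and follows the paper's proof. You substitute $\varepsilon=\tau$ into \cref{thm:James-estimate}, then get the $t=\infty$ forms from \cref{cor:t-infty-limit}, the convergence $\M_t(A_{\tau,\varepsilon},B_{\tau,\varepsilon})\to\max\{A_{\tau,\varepsilon},B_{\tau,\varepsilon}\}$, and $A_{\tau,\varepsilon}\geq B_{\tau,\varepsilon}$, which you usefully make explicit as $A_{\tau,\varepsilon}-B_{\tau,\varepsilon}=2\tau(1-\varepsilon)\geq0$. Your limit-free variant using $\Ssk_{X,\infty}(\tau)\leq2^{1/t}\Ssk_{X,t}(\tau)$ is also valid, and it is the same inequality that underlies \cref{cor:t-infty-limit}.
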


\begin{proof}
Set $\varepsilon=\tau$ in \cref{thm:James-estimate}.  The limiting
statements follow from \cref{cor:t-infty-limit} and the convergence of
power means.  Notice that
$A_{\tau,\varepsilon}\geq B_{\tau,\varepsilon}$ on the stated parameter
range.
\end{proof}

We finish with the normal-structure consequence.  A Banach space $X$ has
\emph{uniform normal structure} if there is $0<c<1$ such that every
non-singleton bounded closed convex set $C\subset X$ contains a point
$x_0$ satisfying
\[
 \sup_{y\in C}\norm{x_0-y}
 \leq c\,\sup_{x,y\in C}\norm{x-y}.
\]
This property is central in metric fixed point theory; see
\cite{GoebelKirk}.  Saejung proved \cite{Saejung} that $X$ has uniform
normal structure whenever
\begin{equation}\label{eq:Saejung}
 J(\varepsilon,X)<g(\varepsilon)
 :=\frac{\varepsilon+\sqrt{4+\varepsilon^2}}{2}
\end{equation}
for some $0<\varepsilon\leq1$.

\begin{corollary}
\label{cor:normal-criterion}
Let $0\leq\tau\leq1$, $0<\varepsilon\leq1$, and
$1\leq t\leq\infty$.  If
\begin{equation}\label{eq:normal-general}
 \Ssk_{X,t}(\tau)
 >\frac{\M_t(A_{\tau,\varepsilon},B_{\tau,\varepsilon})}
         {g(\varepsilon)},
\end{equation}
then $X$ has uniform normal structure.  In particular, it is enough that
for some $0<\tau\leq1$,
\begin{equation}\label{eq:normal-diagonal}
 \Ssk_{X,t}(\tau)
 >\frac{1}{g(\tau)}
  \left(\frac{(1+2\tau-\tau^2)^t+(1+\tau^2)^t}{2}\right)^{1/t}
\end{equation}
when $t<\infty$, with the maximum replacing the power mean for
$t=\infty$.
\end{corollary}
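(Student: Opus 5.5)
The plan is to combine the James-type estimate with Saejung's criterion \eqref{eq:Saejung}. Fix $0\leq\tau\leq1$ and $0<\varepsilon\leq1$, and assume \eqref{eq:normal-general}. Note first that the hypothesis forces $\Ssk_{X,t}(\tau)>0$. This is automatic anyway, since \cref{thm:sharp-chain} gives $\Ssk_{X,t}(\tau)\geq\rTau>0$, so we may divide by it.

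For $1\leq t<\infty$, \cref{thm:James-estimate}, in the form \eqref{eq:James-mean}, gives
\[
 J(\varepsilon,X)\leq\frac{\M_t(A_{\tau,\varepsilon},B_{\tau,\varepsilon})}{\Ssk_{X,t}(\tau)}.
\]
Rearranging \eqref{eq:normal-general} bounds the right-hand side strictly by $g(\varepsilon)$. Hence $J(\varepsilon,X)<g(\varepsilon)$, and Saejung's theorem yields uniform normal structure.

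For $t=\infty$, the convention is $\M_\infty(A,B)=\max\{A,B\}=A_{\tau,\varepsilon}$. This holds because $A_{\tau,\varepsilon}-B_{\tau,\varepsilon}=2\tau(1-\varepsilon)\geq0$. I will invoke the limiting inequality $\Ssk_{X,\infty}(\tau)J(\varepsilon,X)\leq A_{\tau,\varepsilon}$ from \cref{cor:diagonal} and argue exactly as above.

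The diagonal statement \eqref{eq:normal-diagonal} is the specialization $\varepsilon=\tau$. I need $\tau>0$ here so that Saejung's criterion applies, since it requires $\varepsilon>0$. The computation is
\[
A_{\tau,\tau}=1+\tau+\tau-\tau^2=1+2\tau-\tau^2,
\qquad
B_{\tau,\tau}=1-\tau+\tau+\tau^2=1+\tau^2,
\]
so $\M_t(A_{\tau,\tau},B_{\tau,\tau})$ is precisely the power mean appearing in \eqref{eq:normal-diagonal}. For $t=\infty$ it becomes the maximum, namely $1+2\tau-\tau^2$.

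There is no real obstacle here. The only point needing care is the endpoint $t=\infty$: one must use the limiting estimate established in \cref{cor:diagonal}, which rests on \cref{cor:t-infty-limit}, rather than \cref{thm:James-estimate} directly, since the latter is stated only for $t<\infty$.
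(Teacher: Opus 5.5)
Your proposal is correct and follows the paper's proof: you deduce $J(\varepsilon,X)<g(\varepsilon)$ from the James-type estimate \eqref{eq:James-mean}, apply Saejung's criterion, and obtain the diagonal case by setting $\varepsilon=\tau$. Your explicit handling of $t=\infty$, through the limiting inequality in \cref{cor:diagonal} together with $A_{\tau,\varepsilon}\geq B_{\tau,\varepsilon}$, fills in a step that the paper leaves implicit.
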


\begin{proof}
Under \eqref{eq:normal-general}, \cref{thm:James-estimate} gives
$J(\varepsilon,X)<g(\varepsilon)$.  Apply Saejung's criterion
\eqref{eq:Saejung}.  The diagonal statement follows by taking
$\varepsilon=\tau$.
\end{proof}

\begin{remark}
\label{rem:Gao-threshold}
At $\tau=\varepsilon=1$ and $t=2$, condition
\eqref{eq:normal-general} becomes
\[
 \Ssk_{X,2}(1)>\frac{2}{(1+\sqrt5)/2}.
\]
If Gao's quadratic parameter is written as
$f(X)=2\Ssk_{X,2}(1)^2$, this is exactly
\[
 f(X)>12-4\sqrt5,
\]
the familiar Pythagorean threshold discussed in
\cite{Gao,Saejung,ZuoTang}.  The two-parameter form
\eqref{eq:normal-general} is stronger as a certificate: one may keep the
skew parameter $\tau$ independent of the James scale $\varepsilon$ and
optimize the resulting bound for the space under consideration.
\end{remark}

\begin{remark}
The supremal invariant \eqref{eq:skew-james-intro} and the infimal invariant
\eqref{eq:def-S-intro} coincide in Hilbert space at the quadratic exponent,
but they encode opposite operator radii on $L_p$.  The product identity
\eqref{eq:product-identity}, the gap characterization
\cref{thm:gap-characterization}, and the symmetrized estimate
\cref{thm:James-estimate} show that the infimum theory is not a formal
restatement of the skew James theory.  Rather, the two theories are linked
by the scaled involution $A_\tau^2=(1+\tau^2)I$, which is the common
algebraic source of their exact model behavior.
\end{remark}


\begin{thebibliography}{99}
\footnotesize
\setlength{\itemsep}{0pt}

\bibitem{BerghLofstrom}
J.~Bergh and J.~L\"ofstr\"om,
\emph{Interpolation Spaces: An Introduction},
Grundlehren der mathematischen Wissenschaften, vol.~223,
Springer, Berlin, 1976.

\bibitem{Day}
M.~M.~Day,
Some characterizations of inner-product spaces,
\emph{Trans. Amer. Math. Soc.} \textbf{62} (1947), 320--337.

\bibitem{Clarkson}
J.~A.~Clarkson,
Uniformly convex spaces,
\emph{Trans. Amer. Math. Soc.} \textbf{40} (1936), 396--414.

\bibitem{Gao}
J.~Gao,
A Pythagorean approach in Banach spaces,
\emph{J. Inequal. Appl.} (2006), Art.~ID 94982, 11 pp.,
\href{https://doi.org/10.1155/JIA/2006/94982}{doi:10.1155/JIA/2006/94982}.

\bibitem{GoebelKirk}
K.~Goebel and W.~A.~Kirk,
\emph{Topics in Metric Fixed Point Theory},
Cambridge Studies in Advanced Mathematics, vol.~28,
Cambridge University Press, Cambridge, 1990.

\bibitem{James}
R.~C.~James,
Uniformly non-square Banach spaces,
\emph{Ann. of Math. (2)} \textbf{80} (1964), 542--550,
\href{https://doi.org/10.2307/1970663}{doi:10.2307/1970663}.

\bibitem{JordanVonNeumann}
P.~Jordan and J.~von Neumann,
On inner products in linear, metric spaces,
\emph{Ann. of Math. (2)} \textbf{36} (1935), 719--723,
\href{https://doi.org/10.2307/1968653}{doi:10.2307/1968653}.

\bibitem{KatoMaligrandaTakahashi}
M.~Kato, L.~Maligranda, and Y.~Takahashi,
On James and Jordan--von Neumann constants and the normal structure
coefficient of Banach spaces,
\emph{Studia Math.} \textbf{144} (2001), 275--295,
\href{https://doi.org/10.4064/sm144-3-5}{doi:10.4064/sm144-3-5}.

\bibitem{LiuLi}
Q.~Liu and Y.~Li,
On a new geometric constant related to the Euler--Lagrange type identity in
Banach spaces,
\emph{Mathematics} \textbf{9} (2021), Art.~116,
\href{https://doi.org/10.3390/math9020116}{doi:10.3390/math9020116}.

\bibitem{LiuZhouSarfrazLi}
Q.~Liu, C.~Zhou, M.~Sarfraz, and Y.~Li,
On new moduli related to the generalization of the parallelogram law,
\emph{Bull. Malays. Math. Sci. Soc.} \textbf{45} (2022), 307--321,
\href{https://doi.org/10.1007/s40840-021-01196-7}{doi:10.1007/s40840-021-01196-7}.

\bibitem{MartiniPapiniWu}
H.~Martini, P.~L.~Papini, and S.~Wu,
Variations of the James and Sch\"affer constants in Banach spaces,
\emph{Ann. Funct. Anal.} \textbf{15} (2024), Art.~7,
\href{https://doi.org/10.1007/s43034-023-00308-7}{doi:10.1007/s43034-023-00308-7}.

\bibitem{NiLiuZhou}
Q.~Ni, Q.~Liu, and Y.~Zhou,
Skew generalized von Neumann--Jordan constant in Banach spaces,
\emph{Hacet. J. Math. Stat.} \textbf{54} (2025), 436--444,
\href{https://doi.org/10.15672/hujms.1451136}{doi:10.15672/hujms.1451136}.

\bibitem{RaoWuLiuNiWang}
Z.-Y.~Rao, Q.~Wu, Q.~Liu, Q.-C.~Ni, and Y.-X.~Wang,
The skew James type constant in Banach spaces,
\emph{Anal. Math.} \textbf{52} (2026), 619--636,
\href{https://doi.org/10.1007/s10476-026-00166-0}{doi:10.1007/s10476-026-00166-0}.

\bibitem{Saejung}
S.~Saejung,
Sufficient conditions for uniform normal structure of Banach spaces and
their duals,
\emph{J. Math. Anal. Appl.} \textbf{330} (2007), 597--604,
\href{https://doi.org/10.1016/j.jmaa.2006.07.087}{doi:10.1016/j.jmaa.2006.07.087}.

\bibitem{Schaffer}
J.~J.~Sch\"affer,
\emph{Geometry of Spheres in Normed Spaces},
Lecture Notes in Pure and Applied Mathematics, vol.~20,
Marcel Dekker, New York, 1976.

\bibitem{Takahashi}
Y.~Takahashi,
Some geometric constants of Banach spaces: a unified approach,
in \emph{Banach and Function Spaces II} (M.~Kato and L.~Maligranda, eds.),
Yokohama Publishers, Yokohama, 2007, pp.~191--220.

\bibitem{WangYang}
F.~Wang and C.~Yang,
An inequality between the James and James type constants in Banach spaces,
\emph{Studia Math.} \textbf{201} (2010), 191--201,
\href{https://doi.org/10.4064/sm201-2-5}{doi:10.4064/sm201-2-5}.

\bibitem{YangYangBF}
{\
C.~Yang and X.~Yang,
On the James type constant and von Neumann--Jordan constant for a class of
Bana\'s--Fr\k{a}czek type spaces,
\emph{J. Math. Inequal.} \textbf{10} (2016), 551--558,
\href{https://doi.org/10.7153/jmi-10-43}{doi:10.7153/jmi-10-43}.}

\bibitem{ZuoTang}
Z.-F.~Zuo and C.-L.~Tang,
Sch\"affer-type constant and uniform normal structure in Banach spaces,
\emph{Ann. Funct. Anal.} \textbf{7} (2016), 452--461,
\href{https://doi.org/10.1215/20088752-3605636}{doi:10.1215/20088752-3605636}.

\end{thebibliography}
\end{document}